\title{Uniform in time modulus of continuity of Brownian motion}
\author{Julien Chevallier
\footnote{Corresponding author: {\sf{e-mail: \href{mailto:julien.chevallier1@univ-grenoble-alpes.fr}{julien.chevallier1@univ-grenoble-alpes.fr}}}} \medskip \\
Univ. Grenoble Alpes, CNRS, Inria, Grenoble INP*, LJK, 38000 Grenoble, France\\
* Institute of Engineering Univ. Grenoble Alpes}
\newtheorem{theorem}{Theorem}
\newtheorem{lemma}{Lemma}
\newtheorem{definition}{Definition}
\newtheorem{proposition}{Proposition}
\newtheorem{assumption}{Assumption}
\newtheorem{corollary}{Corollary}
\newtheorem{remark}{Remark}
\begin{document}
\maketitle

\begin{abstract}
    Let $B=(B_t)_{t\geq 0}$ be a standard Brownian motion. The main objective is to find a uniform (in time) control of the modulus of continuity of $B$ in the spirit of what appears in \cite{kurtz1978strong}. More precisely, it involves the control of the exponential moments of the random variable $\sup_{0\leq s\leq t} |B_t-B_s|/w(t,|t-s|)$ for a suitable function $w$. A stability inequality for diffusion processes is then derived and applied to two simple frameworks.

    \paragraph*{Keywords:} Brownian motion, Modulus of continuity, stability, strong approximation.
    \vspace*{-1em}
    \paragraph*{MSC classification:} Primary 60J65 ; 60G17, Secondary 60J60 ; 60F15.
\end{abstract}

\section{Introduction}

Let $f:\mathbb{R}_+\to \mathbb{R}$ be some function and $T>0$ be a positive time horizon. Then, the modulus of continuity of $f$ on $[0,T]$ is the function $\omega_{f}(T,\cdot)$ defined by, for all $h\leq T$,
\begin{equation*}
    \omega_{f}(T,h) := \sup_{0\leq s<t\leq T, |t-s|\leq h} |f(t)-f(s)|.
\end{equation*}
Let $B = (B_t)_{t\in \mathbb{R}_+}$ be a standard Brownian motion living on the probability space $(\Omega,\mathcal{F},\mathbb{P})$ and denote by $\omega_{B}$ its pathwise modulus of continuity as defined above. Of course, this function depends on the path of $B$ and in turn is random. Perhaps the most known result about $\omega_{B}$ is Lévy's modulus of continuity theorem \cite[p. 172]{levy1937theorie}, which gives the following equivalent of $\omega_{B}$ for small $h$,
\begin{equation*}
    \omega _{B}(1, h) \sim_{h\to 0} {\sqrt {2h \ln \frac{1}{h}}} \quad \text{almost surely.}
\end{equation*}
More recently, some bounds were obtained on $\omega_{B}$. 

On the one hand, \cite{fischer2009the} proves that for all $p>0$, there exists an explicit constant $C(p)$ such that for all $T>0$ and $h\leq T$,
\begin{equation*}
    \mathbb{E}\left[ \left( \omega_{B}(T,h) \right)^{p} \right] \leq C(p) \left( \sqrt{h\ln\left( \frac{2T}{h} \right)} \right)^{p}.
\end{equation*}
Moreover, this bound is also derived for general Itô processes. Those results are then applied to the control of the Euler approximation of stochastic delay differential equations.

On the other hand, the Remark below Lemma 3.2. in \cite{kurtz1978strong} states that the random variable 
\begin{equation*}
    M = \sup_{\substack{0< s<t< T\\ h=|t-s|}} \frac{|B_{t}-B_{s}|}{\sqrt{h \left(1+ \ln\frac{T}{h} \right)}},
\end{equation*}
is such that $M^2$ admits exponential moments, that is $\mathbb{E}\left[ \exp(\lambda M^{2}) \right] < \infty$ for some $\lambda>0$. Of course, it is related with the following bound for the modulus of continuity, 
\begin{equation}\label{eq:control:Kurtz}
    \omega_{B}(T,h) \leq M \sqrt{h \left(1+ \ln\frac{T}{h} \right)}.
\end{equation}
Those results are applied to the derivation of strong diffusion approximation of jump processes. Even if it is not clear from the notation, the random variable $M$ depends on $T$ so that the equation above does not give a bound for the uniform (with respect to $T$) modulus of continuity.

The main objective of this paper is to prove the following bound for the uniform modulus of continuity of Brownian motion.
\begin{theorem}\label{thm:expo:moments}
    Let $B$ be a standard Brownian motion. Let $\varepsilon>0$ and define the random variable
    \begin{equation*}
    M_B := \sup_{\substack{0< s<t<\infty, \\ h=|t-s|}} \frac{|B_{t}-B_{s}|}{\sqrt{h \left(1+ \ln\frac{t}{h} + \varepsilon |\ln t| \right)}}.
    \end{equation*}
    Then, $M_B^{2}$ admits exponential moments.
\end{theorem}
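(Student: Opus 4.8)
The plan is to establish a Gaussian tail bound $\mathbb{P}(M_B > x) \le C e^{-c x^2}$ for all $x \ge 1$ and some $c>0$, which is equivalent to $M_B^2$ having an exponential moment (take any $\lambda < c$). Since enlarging $\varepsilon$ only increases the denominator and hence decreases $M_B$, I may assume $\varepsilon \in (0,1)$. I would split the defining supremum according to whether the increment is \emph{local} ($h \le t/2$) or \emph{global} ($h > t/2$), writing $M_B \le \max(M^{\mathrm{loc}}, M^{\mathrm{glob}})$, bound the tail of each term separately, and conclude by a union bound over the two regimes.

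For the global regime, $h > t/2$ forces $0 \le \ln(t/h) < \ln 2$, so the bracket is comparable to $1 + \varepsilon|\ln t|$. Using $|B_t - B_s| \le |B_t| + |B_s|$ together with the monotonicity of $g(t) := t(1 + \varepsilon|\ln t|)$ on $(0,\infty)$ (which holds because $\varepsilon < 1$) and $g(s)\le g(t)$ for $s<t$, I would reduce to $M^{\mathrm{glob}} \le 2\sqrt2\, R$ with $R := \sup_{t>0} |B_t|/\sqrt{g(t)}$. The quantity $R$ is controlled blockwise: on each dyadic block $[2^n, 2^{n+1}]$, $n \in \mathbb{Z}$, Brownian scaling reduces $\sup_t |B_t|$ to $\sqrt{2^{n+1}}\,\sup_{[0,1]}|\widetilde B|$, and the reflection (maximal) inequality gives a per-block tail $\le C\exp(-c x^2(1 + \varepsilon|n|))$, using $\inf_{[2^n,2^{n+1}]} g = g(2^n) \gtrsim 2^n(1+\varepsilon|n|)$. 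Summing the geometric series $\sum_n e^{-c\varepsilon x^2 |n|}$ over $n \in \mathbb Z$ yields $\mathbb{P}(R > x) \le C' e^{-c'x^2}$.

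For the local regime, I would cover the time axis by the blocks $[2^{n-1}, 2^{n+1}]$ (note $h \le t/2$ forces $s,t$ into the same block) and let $W_n$ be the supremum of the ratio over pairs in block $n$. Brownian scaling by $2^{n+1}$ maps block $n$ to the fixed window $[1/4,1]$, where $\ln(t/h)$ is preserved while $\varepsilon|\ln t|$ becomes, up to bounded constants, an additive shift $\beta_n \asymp 1 + \varepsilon|n|$ of the bracket. Everything then reduces to the key estimate
\begin{equation*}
\mathbb{P}\Big(\sup_{0<u<v<1}\frac{|\widetilde B_v - \widetilde B_u|}{\sqrt{(v-u)\big(\beta + \ln\frac{1}{v-u}\big)}} > x\Big) \le C\, e^{-c x^2 \beta}, \qquad \beta \ge 1,
\end{equation*}
with $C,c$ independent of $\beta$: applying it with $\beta = \beta_n$ and summing $\sum_n C e^{-cx^2\beta_n} \le C'e^{-c'x^2}$ controls $\sup_n W_n$, and the case $\beta = 1$ is essentially the estimate of \cite{kurtz1978strong}.

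The main obstacle is this uniform-in-$\beta$ bound, which I would prove by dyadic chaining while carrying $\beta$ through. Bounding the modulus at scale $g \asymp 2^{-k}$ by $\sum_{l \ge k}\Delta_l$, where $\Delta_l := \max_{0\le j<2^l}|\widetilde B_{(j+1)2^{-l}} - \widetilde B_{j2^{-l}}|$ is the maximum of $2^l$ independent $\mathcal N(0,2^{-l})$ increments, the crucial point is that at level $l$ the threshold retains the additive term $\beta$, so the bad-event probability is $\le 2^{l+1}\exp(-\tfrac{x^2}{2}(\beta + l\ln 2))$; here $\beta$ factors out as $e^{-x^2\beta/2}$ uniformly in $l$, while $\sum_l 2^{l+1}e^{-\frac{x^2}{2}l\ln2}$ converges for $x > \sqrt2$. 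This $\beta$-independence of the constants — the fact that the additive term appears at every scale and hence contributes a single multiplicative factor $e^{-cx^2\beta}$ — is exactly what delivers summability over the infinitely many blocks, and is the only genuinely new point beyond \cite{kurtz1978strong}. The remaining care is routine: verifying that chaining dominates arbitrary (not merely consecutive-dyadic) increments, that $\sum_{l\ge k}\sqrt{2^{-l}(\beta + l\ln2)} \lesssim \sqrt{2^{-k}(\beta + k\ln2)}$ uniformly in $k$ and $\beta\ge1$, and handling the finitely many small-$|n|$ blocks directly via \eqref{eq:control:Kurtz}.
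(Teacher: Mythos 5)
Your proposal is correct, but it takes a genuinely different route from the paper. The paper never chains by hand: it feeds everything into a rescaled Garsia--Rodemich--Rumsey inequality (Lemma \ref{lem:modified:GRR}) with $\Psi(x)=e^{x^{2}/2}-1$ and $\mu(x)=\sqrt{cx}$, controls all horizons at once through the single random variable $\xi=\sup\{\xi_{T},\xi_{1/T}\,:\,T\in\mathbb{N}^{*}\}$, where $\xi_{T}$ is the GRR double integral over $[0,T]^{2}$ damped by the polynomial weight $f_{\varepsilon}(T)$, proves $\mathbb{E}[\xi^{p}]<\infty$ for $p\in(1,c)$ (Lemma \ref{lem:finite:moment:xi}), and then converts the GRR estimate, after explicit computation of the integrals $I_{1},I_{2},I_{3}$ (including an $\operatorname{erf}$ bound), into $M_{B}^{2}\leq 256c\ln(4\xi+1)+8192$, whence $\mathbb{E}[e^{\lambda M_{B}^{2}}]\leq e^{8192\lambda}\,\mathbb{E}[(4\xi+1)^{256c\lambda}]<\infty$ for suitable $\lambda$. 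Your two-regime decomposition plus dyadic chaining with $\beta$-shifted thresholds exploits the term $\varepsilon|\ln t|$ in exactly the same structural role --- as the source of summability over the two-sided family of scales, via $\sum_{n\in\mathbb{Z}}e^{-cx^{2}\varepsilon|n|}<\infty$, which mirrors the convergence of $\sum_{T}g(T)$ in Lemma \ref{lem:finite:moment:xi} --- and your uniform-in-$\beta$ lemma is the correct identification of what must be added to \cite{kurtz1978strong}. Your reductions check out: passing to $\varepsilon\in(0,1)$ is legitimate since enlarging $\varepsilon$ only decreases $M_{B}$ (and is what makes $g(t)=t(1+\varepsilon|\ln t|)$ nondecreasing, as your global regime requires); $h\leq t/2$ does pin $s$ and $t$ into a common block $[2^{n-1},2^{n+1}]$; and the comparison $\sum_{l\geq k}\sqrt{2^{-l}(\beta+l\ln 2)}\lesssim\sqrt{2^{-k}(\beta+k\ln 2)}$ is indeed uniform over $\beta\geq 1$ and $k\geq 0$. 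What each approach buys: yours is elementary and self-contained (Gaussian tails, the reflection principle, union bounds) and directly yields the Gaussian tail $\mathbb{P}(M_{B}>x)\leq Ce^{-cx^{2}}$, a slightly stronger-looking statement than (though equivalent to) exponential moments of $M_{B}^{2}$; the paper's route outsources all chaining to \cite{garsia1970real} and produces fully explicit constants, at the cost of the integral estimates. One small simplification available to you: the small-$|n|$ blocks need no separate appeal to \eqref{eq:control:Kurtz}, since they are just the $\beta=1$ instance of your uniform lemma.
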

The second objective is to derive a stability inequality for diffusion processes in the spirit of what appear in the proofs of the strong diffusion approximation in \cite{kurtz1978strong}. Finally, two applications of this inequality are given.

The paper is organized as follows. Some properties of the quantity that appears in the denominator in the definition of $M_{B}$ are stated in Section \ref{sec:about:w}. A stability inequality for diffusion processes is proved in Section \ref{sec:stability}. This inequality can be used to prove convergence results in the framework of small perturbations of the coefficients of the diffusion in two different frameworks (see Section \ref{sec:applications}). Finally, the proof of the main result is given in Section \ref{sec:proof:thm}.

\section{About the upper-bound}
\label{sec:about:w}
Let $0<\varepsilon<1$ in this section, and define $w:\mathbb{R}_+^*\times \mathbb{R}_+^*\to \mathbb{R}$ by, for all $0<h\leq t $,
\begin{equation}
    w(t,h) := \sqrt{h \left(1+\ln \frac{t}{h} + \varepsilon |\ln t| \right)}, \quad \text{ and } \quad w(t,h) := w(t,t) \text{ if $0<t<h$.}
\end{equation}

The value $w(t,h)$ for $h\leq t$ is linked with Theorem \ref{thm:expo:moments}, whereas it is defined for $h> t$ in order to satisfy some monotony (see Proposition \ref{prop:inequality:omega}). Notice that, in comparison with Equation ??, it is also natural to consider the function $w_K:\mathbb{R}_+^*\times \mathbb{R}_+^*\to \mathbb{R}$ defined by, for all $0<h\leq t $,
\begin{equation*}
    w_K(t,h) := \sqrt{h \left(1+\ln \frac{t}{h} \right)}, \quad \text{ and } \quad w_K(t,h) := w_K(t,t) \text{ if $0<t<h$.}
\end{equation*}
Of course, $w_K(t,h) \leq w(t,h)$ but $w_K$ controls the the modulus of continuity for finite time horizons whereas $w$ gives a uniform control. For instance, one can compare Equation \eqref{eq:control:Kurtz} with the following corollary of Theorem \ref{thm:expo:moments}.

\begin{corollary}\label{cor:control:modulus}
    There exists a random variable $M_B$ such that $M_B^{2}$ has exponential moments and for all $0<h<t<+\infty$,
    \begin{equation*}
    \omega_{B}(t,h) \leq M_B\, w(t,h).
    \end{equation*}
\end{corollary}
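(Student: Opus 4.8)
The plan is to derive Corollary \ref{cor:control:modulus} directly from Theorem \ref{thm:expo:moments}. The random variable $M_B$ whose existence is claimed is precisely the one defined in the theorem, so its exponential-moment property for $M_B^2$ is already established. What remains is to translate the supremum bound encoded in $M_B$ into the modulus-of-continuity inequality $\omega_B(t,h)\leq M_B\,w(t,h)$ for every pair $0<h<t<\infty$. I would organize the argument around the definition of $\omega_B$ and the monotonicity of $w$.

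First I would unwind the definition of $M_B$. By construction, for every pair $0<s<t'<\infty$ with $h'=|t'-s|$ we have
\begin{equation*}
    |B_{t'}-B_s| \leq M_B\, \sqrt{h'\left(1+\ln\frac{t'}{h'}+\varepsilon|\ln t'|\right)} = M_B\, w(t',h').
\end{equation*}
Now fix $0<h<t<\infty$ and take any $s,t'$ realizing (or approaching) the supremum in the definition of $\omega_B(t,h)$, that is with $0\leq s<t'\leq t$ and $t'-s\leq h$. Writing $h'=t'-s\leq h$, the pointwise bound above gives $|B_{t'}-B_s|\leq M_B\,w(t',h')$. The key step is then to show $w(t',h')\leq w(t,h)$, so that taking the supremum over all admissible $(s,t')$ yields $\omega_B(t,h)\leq M_B\,w(t,h)$.

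The monotonicity step is where I would lean on Proposition \ref{prop:inequality:omega} (the referenced monotonicity property that motivated extending $w$ to the region $h>t$). Concretely, I expect $w(t,h)$ to be nondecreasing in the time variable $t$ and nondecreasing in the increment variable $h$ on the relevant domain; combining $t'\leq t$ with $h'\leq h$ then gives $w(t',h')\leq w(t',h)\leq w(t,h)$, using the extended definition $w(t',h)=w(t',t')$ when $h>t'$ to handle the case where $h$ exceeds the smaller horizon $t'$. This is exactly why $w$ was defined on all of $\mathbb{R}_+^*\times\mathbb{R}_+^*$ rather than only on $\{h\leq t\}$: without the extension one could not compare $w(t',h')$ with $w(t,h)$ when $h'<h$ but $h$ sits above $t'$.

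The main obstacle, and the only genuinely delicate point, is verifying the monotonicity of $w$ in both arguments, since the expression $1+\ln(t/h)+\varepsilon|\ln t|$ is not obviously monotone: increasing $h$ decreases $\ln(t/h)$ (pushing $w$ down through the logarithm) while the prefactor $\sqrt{h}$ pushes $w$ up, and the $\varepsilon|\ln t|$ term changes behavior across $t=1$. I would therefore isolate these monotonicity claims as the content of Proposition \ref{prop:inequality:omega} and assume them here, reducing the corollary to the short combination above. A minor bookkeeping issue is the boundary case $s=0$ (where $t'/h'$ need not be well controlled if $t'=h'$), but since $h'=t'$ there gives $\ln(t'/h')=0$ and $w(t',t')$ is finite, the bound goes through without modification.
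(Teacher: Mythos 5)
Your proposal is correct and takes essentially the same route as the paper: the corollary is presented there as an immediate consequence of Theorem \ref{thm:expo:moments} together with the monotonicity of $w$ in both arguments from Proposition \ref{prop:inequality:omega} (which is exactly why $w$ is extended to the region $h>t$), with no separate proof given. Your handling of the boundary case $s=0$ via continuity is a harmless refinement of the same argument.
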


\begin{remark}
    Theorem \ref{thm:expo:moments} and in turn Corollary \ref{cor:control:modulus} are not valid with $w$ replaced by $w_K$. In that case, we would have $M_B = \infty$ almost surely. Indeed, by considering $s \to 0$, $M_B$ would be larger than $\sup_{0< t<\infty} t^{-1/2}|B_{t}|$. Yet, the law of the iterated logarithm yields that $t^{-1/2}|B_{t}| \sim \sqrt{2 \ln \ln t} \to \infty$ a.s. as $t\to \infty$.
\end{remark}

Here are listed two nice properties satisfied by our upper-bound function $w$.

\begin{proposition}\label{prop:inequality:omega}
The function $w$ is non decreasing, that is
\begin{equation*}
    \forall t'\geq t\geq 0,\, h'\geq h\geq 0,\quad w(t',h')\geq w(t,h).
\end{equation*}
\end{proposition}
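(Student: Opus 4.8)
The plan is to prove monotonicity in each variable separately, since a function non-decreasing in $t$ (for fixed $h$) and non-decreasing in $h$ (for fixed $t$) is jointly non-decreasing on the partial order $(t',h')\geq(t,h)$. Because $w^2$ and $w$ have the same monotonicity (both non-negative), I would work with the cleaner expression $w(t,h)^2 = h\left(1+\ln\frac{t}{h}+\varepsilon|\ln t|\right)$ on the region $0<h\leq t$, handling the flat extension $w(t,h)=w(t,t)$ for $h>t$ separately.

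First I would fix $h$ and show $t\mapsto w(t,h)$ is non-decreasing. On the region $h\leq t$, I would differentiate $w(t,h)^2$ in $t$, getting $\partial_t\left[h\ln t + \varepsilon h|\ln t|\right] = \frac{h}{t}\left(1+\varepsilon\,\mathrm{sgn}(\ln t)\right)$. For $t\geq 1$ this is $\frac{h}{t}(1+\varepsilon)>0$; for $t<1$ it is $\frac{h}{t}(1-\varepsilon)>0$ precisely because $\varepsilon<1$. So $w^2$ is increasing in $t$ on $h\leq t$. On the region $t<h$, $w(t,h)=w(t,t)$, and I would check that $t\mapsto w(t,t)^2 = t\left(1+\varepsilon|\ln t|\right)$ is increasing by the same sign analysis ($\partial_t = 1+\varepsilon|\ln t|+\varepsilon\,\mathrm{sgn}(\ln t)$, which is $\geq 1-\varepsilon>0$). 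Continuity at $t=h$ glues the two pieces, giving monotonicity in $t$ throughout.

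Next I would fix $t$ and show $h\mapsto w(t,h)$ is non-decreasing. The crucial point is that $w$ is made \emph{constant} in $h$ once $h>t$, so only the region $0<h\leq t$ needs work. There I would differentiate $w(t,h)^2 = h\left(1+\ln t+\varepsilon|\ln t|\right)-h\ln h$ in $h$, obtaining $\partial_h\left[w^2\right] = 1+\ln t+\varepsilon|\ln t| - \ln h - 1 = \ln\frac{t}{h}+\varepsilon|\ln t|\geq 0$, where the inequality holds because $h\leq t$ forces $\ln\frac{t}{h}\geq 0$ and $\varepsilon|\ln t|\geq 0$. Hence $w^2$ is non-decreasing in $h$ on $(0,t]$, and it is constant afterward, so it is non-decreasing in $h$ everywhere.

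Finally I would combine the two monodic statements: for $t'\geq t$ and $h'\geq h$, write $w(t',h')\geq w(t',h)\geq w(t,h)$, using monotonicity in $h$ for the first step and in $t$ for the second. The only real care needed is bookkeeping across the two definitional regimes of $w$ and checking continuity at the seam $h=t$; I expect the main (minor) obstacle to be verifying the sign of the $t$-derivative in the regime $t<1$, where the absolute value flips and the hypothesis $\varepsilon<1$ is exactly what rescues positivity.
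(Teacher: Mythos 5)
Your proof is correct and follows essentially the same route as the paper: coordinate-wise monotonicity, with the $h$-direction resting on the same elementary fact that $h\mapsto h\left(1+\ln\frac{a}{h}\right)$ is non-decreasing for $h\leq a$ (your derivative computation $\partial_h[w^2]=\ln\frac{t}{h}+\varepsilon|\ln t|\geq 0$ is exactly this), and the flat extension disposing of $h>t$. The only difference is that you spell out the $t$-monotonicity, including the regime $t<1$ where the hypothesis $\varepsilon<1$ is genuinely needed, whereas the paper dismisses this direction as ``clear''.
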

\begin{proof}
    Let $t'\geq t\geq 0$ and $h'\geq h\geq 0$. It is clear that $w(t',h) \geq w(t,h)$ and it only remains to prove that $w(t,h')\geq w(t,h)$. 
    
    The function $h\mapsto h(1+\ln(a/h))$ is non-decreasing for all positive $h\leq a$ which directly implies (with $a=\max\{t^{p},t^{q}\}$) that $w(t,h)\leq w(t,\min\{t,h'\})= w(t,h')$ by definition.
\end{proof}

Obviously, the function $w_K$ is also non decreasing. Furthermore, the function $w_K$ satisfies the same scaling invariance as the Brownian motion. More precisely, for all $a>0$, $w_K(at,ah) = \sqrt{a} w_K(t,h)$. This property is almost satisfied by the function $w$ in the following sense.

\begin{proposition}\label{prop:scaling:omega}
    For all $a>0$, $(t,h)\in (\mathbb{R}_+^*)^2$, 
    \begin{equation}
        \frac{1}{1+\sqrt{\varepsilon |\ln a|}}\leq \frac{w(at,ah)}{\sqrt{a} w(t,h)} \leq 1+\sqrt{\varepsilon |\ln a|}.
    \end{equation}
\end{proposition}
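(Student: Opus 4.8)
The plan is to work with the squares of $w$, since $w(t,h)^2 = h\bigl(1+\ln\frac{t}{h}+\varepsilon|\ln t|\bigr)$ is affine in $h$ and has a transparent dependence on $a$. First I would reduce to the regime $0<h\le t$. Indeed, if $h>t$ the definition gives $w(t,h)=w(t,t)$, and since then $ah>at$ as well, we also have $w(at,ah)=w(at,at)$; hence the ratio in the statement equals the one obtained by replacing $h$ with $t$, and it suffices to prove the bound for $0<h\le t$.

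Next I would isolate the scale-invariant part of $w$ from its ``defect''. Writing $w(t,h)^2 = w_K(t,h)^2+\varepsilon h|\ln t|$ and using the exact scaling $w_K(at,ah)^2 = a\,w_K(t,h)^2$ (already noted in the excerpt), a direct computation gives
\[
  w(at,ah)^2 - a\,w(t,h)^2 = \varepsilon a h\bigl(|\ln(at)|-|\ln t|\bigr).
\]
The triangle inequality yields $\bigl||\ln(at)|-|\ln t|\bigr|\le |\ln(at)-\ln t| = |\ln a|$, so that $\bigl|w(at,ah)^2 - a\,w(t,h)^2\bigr|\le \varepsilon a h\,|\ln a|$.

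To convert this additive control into the multiplicative one required, I would observe that for $h\le t$ we have $1+\ln\frac{t}{h}\ge 1$ and $\varepsilon|\ln t|\ge 0$, whence $w(t,h)^2\ge h$ and therefore $a h \le a\,w(t,h)^2 = \bigl(\sqrt a\,w(t,h)\bigr)^2$. Combining the two displays gives
\[
  \bigl|w(at,ah)^2 - \bigl(\sqrt a\,w(t,h)\bigr)^2\bigr| \le \varepsilon|\ln a|\,\bigl(\sqrt a\,w(t,h)\bigr)^2,
\]
so in particular $w(at,ah)^2 \le (1+\varepsilon|\ln a|)\bigl(\sqrt a\,w(t,h)\bigr)^2$. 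Taking square roots and using $\sqrt{1+x}\le 1+\sqrt x$ for $x\ge 0$ produces the upper bound $w(at,ah)/\bigl(\sqrt a\,w(t,h)\bigr) \le 1+\sqrt{\varepsilon|\ln a|}$.

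For the lower bound I would not repeat the estimate but exploit the symmetry of the problem: applying the upper bound just obtained with $a$ replaced by $1/a$ and $(t,h)$ replaced by $(at,ah)$, and noting $|\ln(1/a)|=|\ln a|$, turns the ratio into its reciprocal and gives $\sqrt a\,w(t,h)/w(at,ah)\le 1+\sqrt{\varepsilon|\ln a|}$, which is precisely the desired lower bound. There is no genuine obstacle here; the only points demanding care are the reduction to $h\le t$ and the observation that the defect is controlled \emph{relatively} by $w(t,h)^2$ rather than merely additively, which is what makes the resulting bound independent of $(t,h)$.
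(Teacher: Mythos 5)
Your proof is correct and is essentially the paper's argument in a lightly repackaged form: both rest on $\bigl||\ln(at)|-|\ln t|\bigr|\le|\ln a|$, on $h\le w(t,h)^2$ (the paper's $\sqrt{h}\le w(t,h)$), and on subadditivity of the square root (your $\sqrt{1+x}\le 1+\sqrt{x}$ is the paper's $\sqrt{b+c}\le\sqrt{b}+\sqrt{c}$ after dividing by $\sqrt{b}$), with the same reduction of the case $h>t$ to $h=t$. Your one real refinement is deriving the lower bound by applying the upper bound with $a\mapsto 1/a$ and $(t,h)\mapsto(at,ah)$ instead of rerunning the estimate as the paper does --- a tidy move that also sidesteps the trap of taking square roots in $w(at,ah)^2\ge(1-\varepsilon|\ln a|)\,a\,w(t,h)^2$, which would be vacuous for $\varepsilon|\ln a|\ge 1$.
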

\begin{proof}
    Let $a>0$ and $(t,h)\in (\mathbb{R}_+^*)^2$. Assume that $t\geq h$. We have
    \begin{equation*}
        w(at,ah) = \sqrt{ah \left( 1+\ln \frac{at}{ah} + \varepsilon |\ln at| \right)} \leq  \sqrt{ah\left( 1 + \ln \frac{t}{h} + \varepsilon |\ln t| + \varepsilon | \ln a | \right)}.
    \end{equation*}
    Using the fact that $\sqrt{b+c} \leq \sqrt{b}+\sqrt{c}$ when $b,c\geq 0$ and the fact that $\sqrt{h}\leq w(t,h)$, we get $w(at,ah) \leq \sqrt{a} w(t,h) (1+ \sqrt{\varepsilon |\ln a |})$ which corresponds to the upper bound and the same kind of argument gives the lower bound.

    Finally, if $h>t$, then the same kind of argument can be applied to $w(at,ah) = w(at,at)$ and $w(t,h)=w(t,t)$.
\end{proof}

In particular, this property can be used to compare the modulus of continuity of the Brownian motion $B$ and its space-time scaling. More precisely, we have the following corollary of Theorem \ref{thm:expo:moments}.

\begin{corollary}\label{cor:scaled:BM}
    Let $a>0$ and define the scaled Brownian motion $\tilde{B}$ by $\tilde{B}_t = a^{-1/2} B_{at}$ for all $t\geq 0$. Then, 
    \begin{equation*}
        M_{\tilde{B}} = \sup_{0\leq s<t<+\infty} \frac{|\tilde{B}_{t}-\tilde{B}_{s}|}{w(t,|t-s|)} \leq \left( 1 + \sqrt{\varepsilon |\ln a|} \right) M_{B},
    \end{equation*}
    where $M_{B}$ is the random variable defined in Theorem \ref{thm:expo:moments}.
\end{corollary}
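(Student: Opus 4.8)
The plan is to reduce the statement to the scaling relation for $w$ established in Proposition \ref{prop:scaling:omega}, combined with the elementary scaling identity for the increments of $\tilde{B}$, and then to recognize the resulting supremum as $M_B$ itself through a change of variables in the index set.

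First I would record the increment identity $\tilde{B}_t - \tilde{B}_s = a^{-1/2}(B_{at} - B_{as})$, which is immediate from the definition of $\tilde{B}$. Hence, for any $0\le s<t<\infty$,
\begin{equation*}
    \frac{|\tilde{B}_t - \tilde{B}_s|}{w(t, |t-s|)} = \frac{|B_{at} - B_{as}|}{\sqrt{a}\, w(t, |t-s|)}.
\end{equation*}
Next I would control the denominator from below using the \emph{upper} bound of Proposition \ref{prop:scaling:omega} with $h = |t-s|$, which bounds $w(at,a|t-s|)$ above by $(1+\sqrt{\varepsilon|\ln a|})\sqrt{a}\,w(t,|t-s|)$ and therefore gives
\begin{equation*}
    \sqrt{a}\, w(t, |t-s|) \geq \frac{w(at, a|t-s|)}{1 + \sqrt{\varepsilon |\ln a|}}.
\end{equation*}
Substituting this into the previous display yields, for every $0\le s<t<\infty$,
\begin{equation*}
    \frac{|\tilde{B}_t - \tilde{B}_s|}{w(t, |t-s|)} \leq \left(1 + \sqrt{\varepsilon |\ln a|}\right) \frac{|B_{at} - B_{as}|}{w(at, a|t-s|)}.
\end{equation*}

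Finally I would take the supremum over $0\le s<t<\infty$ and perform the change of variables $u = at$, $v = as$. Because $a>0$, the map $(s,t)\mapsto(v,u)$ is a bijection of $\{0\le s<t<\infty\}$ onto $\{0\le v<u<\infty\}$, and $a|t-s| = |u-v|$, so the supremum of the right-hand factor is exactly $M_B$ as defined in Theorem \ref{thm:expo:moments}. This gives the claimed inequality $M_{\tilde{B}} \leq (1 + \sqrt{\varepsilon|\ln a|}) M_B$.

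This argument is essentially bookkeeping rather than a genuine difficulty, so I do not expect a real obstacle; the only two points requiring attention are the direction of the inequality and the bijectivity of the substitution. For the former, one must take the \emph{upper} bound in Proposition \ref{prop:scaling:omega} (the one bounding $w(at,a|t-s|)$ above), since it is this direction that bounds the denominator $\sqrt{a}\,w(t,|t-s|)$ from below and hence the ratio from above; the complementary lower bound would point the wrong way. For the latter, positivity of $a$ is exactly what guarantees that the rescaled indices again exhaust the full index set, so that no part of the supremum defining $M_B$ is lost.
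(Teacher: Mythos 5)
Your proof is correct and essentially identical to the paper's: the same increment identity, the same lower bound on $\sqrt{a}\,w(t,|t-s|)$ via the scaling estimate, and the same passage to the supremum. One small point in your favor: you correctly attribute the bound to Proposition \ref{prop:scaling:omega}, whereas the paper's proof cites Proposition \ref{prop:inequality:omega} (monotonicity), evidently a reference typo since the inequality used is the scaling one.
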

\begin{proof}
    Let $a>0$ and $0\leq s<t<+\infty$. By Proposition \ref{prop:inequality:omega}, we know that $\sqrt{a}w(t,|t-s|) \geq w(at,a|t-s|)/(1+\sqrt{\varepsilon |\ln a|})$. Hence, 
    \begin{equation*}
        \frac{|\tilde{B}_{t}-\tilde{B}_{s}|}{w(t,|t-s|)} = \frac{|B_{at} - B_{as}|}{\sqrt{a} w(t,|t-s|)} \leq \frac{|B_{at} - B_{as}|}{w(at,a|t-s|)} \left(  1+\sqrt{\varepsilon |\ln a|}\right),
    \end{equation*}
    which gives the result.
\end{proof}

\section{Stability inequality for diffusions}
\label{sec:stability}

The main result of this section is Proposition \ref{prop:stability:inequality:diffusion}. It is a general stability inequality for diffusions which can be used in particular to provide explicit rates for strong convergence results (see Section \ref{sec:applications}).

\subsection{Setting}

Let $X$ and $\overline{X}$ be two diffusion processes satisfying
\begin{equation}\label{eq:EDS:diffusion}
\begin{cases}
X(t) = x_{0} + \int_{0}^{t} b(s,X(s)) ds + B(\Lambda(t))\\
\overline{X}(t) = \overline{x}_{0} + \int_{0}^{t} \overline{b}(s,\overline{X}(s)) ds + B(\overline{\Lambda}(t)),
\end{cases}
\end{equation}
where 
\begin{equation*}
\Lambda(t) := \int_{0}^{t} \sigma(s,X(s))^{2}ds \quad \text{ and } \quad \overline{\Lambda}(t) := \int_{0}^{t} \overline{\sigma}(s,\overline{X}(s))^{2}ds.
\end{equation*}

In the whole paper, we assume that those two equations admit strong solutions. For instance, this is guaranteed if the drift $b$ and the diffusion $\sigma$ are both sub-linear and Lipschitz functions (see \cite[§5.2]{oksendal2003stochastic} for instance). For instance, the equation for $X$ is equivalent to the Ito equation
\begin{equation*}
    X(t) = x_0 + \int_{0}^{t} b(s,X(s)) ds + \int_{0}^{t} \sigma(s,X(s)) dW_s,
\end{equation*}
where $W$ is a standard Brownian motion. See for instance \cite{kurtz1976limit} for details on this equivalence.

\begin{definition}
The functions $g,\overline{g} : \mathbb{R}_{+}\times \mathbb{R}^{d} \to \mathbb{R}^{k}$ are said to be \emph{Lipschitz-bounded-close} with constant $L$ and non-decreasing functions $K,D:\mathbb{R}_{+}\to \mathbb{R}_{+}$, abbreviated as LBC$(L,K,D)$, if for all $t\in \mathbb{R}_{+}$ and $x,\overline{x}\in \mathbb{R}^{d}$,
\begin{equation}\label{eq:def:LBC}
\begin{cases}
|\overline{g}(t,x) - \overline{g}(t,\overline{x})| \leq L |x-\overline{x}|,\\
\max\left\{ |g(t,x)|, |\overline{g}(t,x)| \right\} \leq K(|x|),\\
|g(t,x) - \overline{g}(t,x)| \leq D(|x|).
\end{cases}
\end{equation}
By extension, $g: \mathbb{R}_{+}\times \mathbb{R}^{d} \to \mathbb{R}^{k}$ is said to be \emph{Lipschitz-bounded} with constant $L$ and non-decreasing function $K$, abbreviated as LB$(L,K)$, if the first two lines of \eqref{eq:def:LBC} are satisfied.
\end{definition}
Here are gathered the assumptions made on the parameters of the model.

\begin{assumption}\label{ass:Lipschitz:close}
The functions $b,\overline{b}:\mathbb{R}_{+}\times \mathbb{R}\to \mathbb{R}$ are LBC$(L_{b},K_{b},D_{b})$ and the functions $\sigma,\overline{\sigma}:\mathbb{R}_{+}\times \mathbb{R}\to \mathbb{R}$ are such that $\sigma^{2}$ and $\overline{\sigma}^{2}$ are LBC$(L_{\sigma},K_{\sigma},D_{\sigma})$.
\end{assumption}

\subsection{The result}

In this following, we denote $X^*(t) = \sup_{0\leq s\leq t} |X(s)|$ and $\overline{X}^*(t) = \sup_{0\leq s\leq t} |\overline{X}(s)|$. The following result is highly related to and inspired from \cite[Lemma 3.2.]{kurtz1978strong}.

\begin{proposition}\label{prop:stability:inequality:diffusion}
Let $X$ and $\overline{X}$ be the two processes defined by \eqref{eq:EDS:diffusion}. Let $M_B$ be the random variable defined in Theorem \ref{thm:expo:moments}.

Under Assumption \ref{ass:Lipschitz:close}, for all $T>0$, $\gamma(T):= \sup_{0\leq t\leq T} |X(t) - \overline{X}(t)|$ satisfies
\begin{multline}\label{eq:main:control:diffusion}
\gamma(T) \leq 1 + 2e^{2L_{b}T} \Big[ |x_{0}-\overline{x}_{0}| + TD_{b}(X^*(t)) + M_B\, w(TK_{\sigma}(X^*(t)), TD_{\sigma}(X^*(t))) \\
+ M_B^{2}\, w\left( TK_{\sigma}(X^*(t)+\overline{X}^*(t)) , TL_{\sigma} \right)^{2} \Big].
\end{multline}
\end{proposition}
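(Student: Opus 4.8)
The plan is to control $\gamma(T)$ by a Grönwall-type argument applied to the difference $X(t)-\overline{X}(t)$, isolating the contribution of the Brownian increments through the random variable $M_B$ and the modulus-of-continuity bound from Corollary \ref{cor:control:modulus}. First I would write the difference at time $t\leq T$ as
\begin{equation*}
X(t)-\overline{X}(t) = (x_0-\overline{x}_0) + \int_0^t \bigl(b(s,X(s))-\overline{b}(s,\overline{X}(s))\bigr)\,ds + \bigl(B(\Lambda(t)) - B(\overline{\Lambda}(t))\bigr).
\end{equation*}
The drift integrand I would split as $b(s,X(s))-\overline{b}(s,X(s)) + \overline{b}(s,X(s))-\overline{b}(s,\overline{X}(s))$, bounding the first term by $D_b(X^*(s))\leq D_b(X^*(t))$ using the LBC closeness and monotonicity of $D_b$, and the second by $L_b|X(s)-\overline{X}(s)|$ via the Lipschitz property of $\overline{b}$. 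This yields a term of size $TD_b(X^*(t))$ plus $L_b\int_0^t |X(s)-\overline{X}(s)|\,ds$, which will feed the Grönwall step.

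The core difficulty is the Brownian term $B(\Lambda(t))-B(\overline{\Lambda}(t))$, which I would bound using $|B(\Lambda(t))-B(\overline{\Lambda}(t))| \leq M_B\, w(\max\{\Lambda(t),\overline{\Lambda}(t)\}, |\Lambda(t)-\overline{\Lambda}(t)|)$ from the modulus bound, exploiting the monotonicity of $w$ (Proposition \ref{prop:inequality:omega}). Here $\Lambda(t) = \int_0^t \sigma^2 \leq TK_\sigma(X^*(t))$ by the boundedness in LBC, and similarly for $\overline{\Lambda}$; the time-gap $|\Lambda(t)-\overline{\Lambda}(t)|$ I would split just as for the drift into a closeness part bounded by $TD_\sigma(X^*(t))$ and a Lipschitz part bounded by $L_\sigma\int_0^t |X(s)-\overline{X}(s)|\,ds$. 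Using monotonicity of $w$ in its second argument and the subadditivity-type inequality $w(\cdot, a+b)^2 \leq $ (something controllable), I would separate the $D_\sigma$ contribution, which produces the term $M_B\,w(TK_\sigma(X^*(t)), TD_\sigma(X^*(t)))$, from the Lipschitz contribution involving $\int_0^t|X-\overline{X}|$.

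The subtle point, and the one I expect to be the main obstacle, is that the Lipschitz part of the time-gap appears \emph{inside} $w$, not linearly, so it cannot be directly absorbed into Grönwall. The trick I anticipate is to use the defining inequality $w(t,h)^2 = h(1+\ln(t/h)+\varepsilon|\ln t|)$ together with an elementary bound of the form $w(t,h)\leq \tfrac12 + C\,h$ or $M_B\,w(t,h) \leq \text{const} + M_B^2\,\tfrac{h}{(\cdots)} + \cdots$ to linearise the dependence on $h = L_\sigma\int_0^t|X-\overline{X}|$: concretely one splits the square root so that the quadratic factor $M_B^2\, w(TK_\sigma(X^*(t)+\overline{X}^*(t)), TL_\sigma)^2$ captures the worst-case logarithmic scale while leaving a genuinely linear $\int_0^t|X(s)-\overline{X}(s)|\,ds$ term. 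After this linearisation, collecting all contributions gives
\begin{equation*}
\gamma(T) \leq A + L_b\int_0^T \gamma(s)\,ds + (\text{linear } \sigma\text{-term}),
\end{equation*}
where $A$ gathers the constant $1$, the initial-data, drift-closeness, and the two $M_B$-terms; applying Grönwall's lemma with the appropriate constant then yields the prefactor $2e^{2L_b T}$ and the stated inequality \eqref{eq:main:control:diffusion}. The constant $1$ and the factor $2$ presumably arise precisely from the linearisation step and from bounding the residual $w$-contributions uniformly.
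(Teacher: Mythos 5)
Your setup is sound and matches the paper for most of the argument: the same drift split (closeness plus Lipschitz), the same use of $M_B$ with the monotonicity of $w$ to bound the Brownian term, and a Gr\"onwall step driven by $L_b$. Two remarks before the main point. First, the paper inserts an intermediate process $\tilde{\Lambda}(t)=\int_0^t \overline{\sigma}^2(s,X(s))\,ds$ and writes $B(\Lambda)-B(\overline{\Lambda}) = \bigl(B(\Lambda)-B(\tilde{\Lambda})\bigr)+\bigl(B(\tilde{\Lambda})-B(\overline{\Lambda})\bigr)$; your direct split of $|\Lambda-\overline{\Lambda}|$ inside a single application of $w$ forces the larger first argument $TK_\sigma(X^*+\overline{X}^*)$ in the $D_\sigma$-term (and needs a subadditivity bound $w(t,a+b)\leq w(t,a)+w(t,b)$, which does hold by concavity of $h\mapsto w(t,h)^2$ but which you left unverified), so as written you would prove a slightly weaker inequality than the one stated, where the $D_\sigma$-term has $K_\sigma(X^*(t))$ only.

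The genuine gap is at the step you yourself flag as the main obstacle. Your proposed linearisation $w(t,h)\leq \tfrac12 + C h$ cannot hold with a controllable constant: $w(t,h)^2/h = 1+\ln(t/h)+\varepsilon|\ln t|$ is unbounded in the first argument, which here is the random, unbounded quantity $TK_\sigma(X^*(T)+\overline{X}^*(T))$; and if you instead allow the slope to depend on $M_B$ (as in your variant $M_B\,w(t,h)\leq \mathrm{const}+M_B^2\cdot(\cdots)+c\,h$ with $c$ random), then Gr\"onwall produces a factor $e^{cT}$ involving $M_B$, destroying both the deterministic prefactor $e^{2L_bT}$ and the exponential-moment conclusions downstream. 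The paper does something structurally different: it applies Gr\"onwall \emph{only} to the $L_b$-term, keeping the nonlinear term $M_B\,w\bigl(TK_\sigma(X^*+\overline{X}^*),TL_\sigma\gamma(T)\bigr)$ intact, which yields the self-referential bound \eqref{eq:Gronwall:step}. It then splits cases: if $\gamma(T)\leq 1$ the claim is trivial (this is the origin of the additive $1$); if $\gamma(T)>1$, the sublinearity $w(t,\gamma h)\leq \sqrt{\gamma}\,w(t,h)$ for $\gamma\geq 1$ (again from concavity of $h\mapsto w(t,h)^2$ with value $0$ at $0$) gives $\gamma(T)\leq a + b\sqrt{\gamma(T)}$ with $a=e^{L_bT}\Delta(T)$ and $b=e^{L_bT}M_B\,w(\cdot,TL_\sigma)$, and the elementary implication $\gamma\leq a+b\sqrt{\gamma}\Rightarrow \gamma\leq 2a+b^2$ produces exactly the factor $2$ and the quadratic term $M_B^2\,w(\cdot,TL_\sigma)^2$. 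Your hedged attribution of the constants to the linearisation (``presumably arise'') is therefore not right: even a careful Young-inequality version of your route leaves an additive term of order $e^{L_bT}$ where the proposition has $1$, so the stated constant structure is not recovered without the dichotomy-plus-$\sqrt{\gamma}$ mechanism.
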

\begin{proof}
Let us define the intermediate integrated diffusion coefficient $\tilde{\Lambda}$ as
\begin{equation*}
\tilde{\Lambda}(t) = \int_{0}^{t} \overline{\sigma}^{2}(s,X(s)) ds.
\end{equation*}
The assumptions made on $\overline{\sigma}^{2}$ imply that
\begin{equation*}
\Lambda(t) \leq K_{\sigma}(X^*(t)), \quad \tilde{\Lambda}_{i}(t) \leq K_{\sigma}(X^*(t)) \quad \text{and} \quad \overline{\Lambda}_{i}(t) \leq K_{\sigma}(\overline{X}^*(t)).
\end{equation*}

The difference between $X$ and $\overline{X}$ can be decomposed into $X(t)-\overline{X}(t) = \sum_{j=1}^{5} A_{j}(t)$ with
\begin{equation*}
\begin{cases}
A_{1}(t) := x_{0}-\overline{x}_{0},\\
A_{2}(t) := \int_{0}^{t} b(s,X(s)) - \overline{b}(s,X(s)) ds,\\
A_{3}(t) := \int_{0}^{t} \overline{b}(s,X(s)) - \overline{b}(s,\overline{X}(s)) ds,\\
A_{4}(t) := B(\Lambda(t)) - B(\tilde{\Lambda}(t)),\\
A_{5}(t) := B(\tilde{\Lambda}(t)) - B(\overline{\Lambda}(t)).\\
\end{cases}
\end{equation*}
Thanks to the assumptions on the model, we have
\begin{equation*}
\begin{cases}
|A_{1}(t)| \leq |x_{0}-\overline{x}_{0}|,\\
|A_{2}(t)| \leq tD_{b}(X^*(t)),\\
|A_{3}(t)| \leq L_{b} \int_{0}^{t} |X(s) - \overline{X}(s)| ds.
\end{cases}
\end{equation*}
The last two terms, namely $A_{4}(t)$ and $A_5(t)$, can be bounded by using the monotony of $w$. On the one hand, since $\max\{\Lambda(t), \tilde{\Lambda}(t)\} \leq tK_{\sigma}(X^*(t))$ and $|\Lambda(t) - \tilde{\Lambda}(t)| \leq tD_{\sigma}(X^*(t))$ we have
\begin{equation*}
|A_{4}(t)| \leq M_B \, w(tK_{\sigma}(X^*(t)), tD_{\sigma}(X^*(t))).
\end{equation*}
On the other hand, remind that $\gamma(T)=\sup_{0\leq t\leq T} |X(t) - \overline{X}(t)|$. Since $\max\{\tilde{\Lambda}(t), \overline{\Lambda}(t)\} \leq tK_{\sigma}(X^*(t) + \overline{X}^*(t))$ and, using the fact that $\overline{\sigma}^2$ is Lipschitz, $|\tilde{\Lambda}(t) - \overline{\Lambda}(t)| \leq L_{\sigma} \int_{0}^{t} |X(s) - \overline{X}(s)| ds \leq TL_{\sigma}\gamma(T)$, we have 
\begin{equation*}
    |A_{5}(t)| \leq M_B \, w\left( T K_{\sigma}(X^*(T) + \overline{X}^*(T)) , TL_{\sigma}\gamma(T) \right).
\end{equation*}

Hence, Gronwall's Lemma gives, for all $t\leq T$,
\begin{equation}\label{eq:Gronwall:step}
|X(t) - \overline{X}(t)| \leq e^{L_{b} t} \left(\Delta (T) + M_B \, w\left( T K_{\sigma}(X^*(T) + \overline{X}^*(T)) , TL_{\sigma}\gamma(T) \right) \right).
\end{equation}
with
\begin{equation*}
\Delta(T) := |x_{0}-\overline{x}_{0}| + TD_{b}(X^*(T)) + M_B\, w(TK_{\sigma}(X^*(T)), TD_{\sigma}(X^*(T))).
\end{equation*}

Now, either $\gamma(T)\leq 1$ in which case Equation \eqref{eq:main:control:diffusion} is trivially satisfied, or $\gamma(T) >1$ in which case, using some property of the function $w$, Equation \eqref{eq:Gronwall:step} gives
\begin{equation*}
\gamma(T) \leq e^{L_bT} \Delta(T) + e^{L_bT}  M_B\, w\left( T K_{f}(X^*(T)+\overline{X}^*(T)) , TL_{f} \right) \sqrt{\gamma(T)}.
\end{equation*}
Yet, inequality of the form $\gamma\leq a+b\sqrt{\gamma}$ implies that $\gamma\leq 2a + b^{2}$ which ends the proof.

\end{proof}

\section{Applications of the stability inequality}
\label{sec:applications}

In this section, we use the notation $N>2$ for a scaling parameter and the notation $\alpha>0$ for a parameter which controls the rate of the scaling. Moreover, we consider $\overline{b}$ and $\overline{\sigma}$ two functions such that $\overline{b}$ is LB$(L_{b},K_{b})$ and $\overline{\sigma}^{2}$ is LB$(L_{\sigma},K_{\sigma})$. Finally, we assume for simplicity that the functions $K_{b}$ and $K_{\sigma}$ are constant in this whole section.

\subsection{Spatially independent diffusion coefficient}

Here, we assume that $L_\sigma=0$, that is $\overline{\sigma}(t,x)=\overline{\sigma}(t)$ is constant with respect to the space variable. For all $N>2$, let us define
\begin{equation*}
b^{N}(t,x) := \overline{b}(t,x) + N^{-\alpha}D_{b} \quad \text{ and } \sigma^{N}(t,x)^{2} := \overline{\sigma}(t,x)^{2} + N^{-2\alpha}D_{\sigma},
\end{equation*}
where $D_{b}$ and $D_{\sigma}$ are two constants for simplicity.
Let $\overline{x}_{0} \in \mathbb{R}$ and define $x_{0}^{N} = \overline{x}_{0} + N^{-\alpha}D_{x}$ for some constant $D_{x}\in \mathbb{R}$. Then, let $X^{N}$ and $\overline{X}$ satisfy
\begin{equation}\label{eq:EDS:XN:Xbar:Lsigma=0}
\begin{cases}
X^{N}(t) = x^{N}_{0} + \int_{0}^{t} b^{N}(s,X^{N}(s)) ds + B(\Lambda^{N}(t))\\
\overline{X}(t) = \overline{x}_{0} + \int_{0}^{t} \overline{b}(s,\overline{X}(s)) ds + B(\overline{\Lambda}(t)),
\end{cases}
\end{equation}
where 
\begin{equation*}
\Lambda^{N}(t) := \int_{0}^{t} \sigma^{N}(s)^{2}ds \quad \text{ and } \quad \overline{\Lambda}(t) := \int_{0}^{t} \overline{\sigma}(s)^{2}ds.
\end{equation*}

We are now in position to state the following corollary of Proposition \ref{prop:stability:inequality:diffusion}.

\begin{corollary}\label{cor:convergence:Lsigma:0}
    Let $\eta>0$. There exists a random variable $\Xi$ such that $\Xi^2$ admits exponential moments and, for all $N>1$,
    \begin{equation*}
        \sup_{0\leq t < \infty} \frac{|X^{N}(t) - \overline{X}(t)|}{e^{2(L_b + \eta)t}} \leq \Xi \frac{\ln N}{N^{\alpha}}.
    \end{equation*}
\end{corollary}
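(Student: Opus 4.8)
The plan is to apply Proposition \ref{prop:stability:inequality:diffusion} to the pair $(X^N,\overline{X})$ and then convert the finite-time bound \eqref{eq:main:control:diffusion} into the claimed uniform-in-time estimate. First I would identify the relevant LBC constants for the present setting. By construction, $b^N$ and $\overline{b}$ differ by the constant $N^{-\alpha}D_b$, so they are LBC$(L_b,K_b,D_b^N)$ with $D_b^N = N^{-\alpha}|D_b|$; similarly $(\sigma^N)^2$ and $\overline{\sigma}^2$ differ by the constant $N^{-2\alpha}D_\sigma$, so they are LBC$(L_\sigma,K_\sigma,D_\sigma^N)$ with $D_\sigma^N = N^{-2\alpha}|D_\sigma|$, and here $L_\sigma=0$. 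The initial data satisfy $|x_0^N-\overline{x}_0| = N^{-\alpha}|D_x|$. Since $K_b,K_\sigma$ are assumed constant, the arguments $X^*(t),\overline{X}^*(t)$ inside $K_\sigma$ disappear, so every occurrence of $w(\cdots)$ simplifies to $w$ evaluated at deterministic arguments.

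Plugging these into \eqref{eq:main:control:diffusion}, for each fixed $T>0$ I get
\begin{equation*}
\gamma(T) \leq 1 + 2e^{2L_b T}\Big[ N^{-\alpha}|D_x| + T N^{-\alpha}|D_b| + M_B\, w(TK_\sigma, T N^{-2\alpha}|D_\sigma|) + M_B^2\, w(TK_\sigma, T L_\sigma)^2 \Big].
\end{equation*}
Because $L_\sigma=0$ here, the last term $w(TK_\sigma,0)^2$ vanishes (the denominator argument $h=TL_\sigma=0$), which removes the $M_B^2$ contribution entirely. The next step is to estimate the surviving Brownian term $w(TK_\sigma, TN^{-2\alpha}|D_\sigma|)$. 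Writing $h = TN^{-2\alpha}|D_\sigma|$ and $t=TK_\sigma$, the definition gives $w(t,h)^2 = h(1+\ln(t/h)+\varepsilon|\ln t|)$; since $h \sim N^{-2\alpha}$, the logarithm $\ln(t/h)$ contributes a term of order $\ln N$, so $w(TK_\sigma,TN^{-2\alpha}|D_\sigma|) \lesssim N^{-\alpha}\sqrt{C_T + \ln N}$ for a $T$-dependent constant, and for large $N$ this is of order $N^{-\alpha}\sqrt{\ln N}$ times a factor depending on $T$.

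The main obstacle — and the step requiring care — is passing from the family of finite-$T$ bounds to the single uniform statement with the common random variable $\Xi$ and the weight $e^{2(L_b+\eta)t}$. The idea is that the prefactor $e^{2L_bT}$ from Gronwall, combined with the square-root-logarithmic growth in $T$ coming from $w(TK_\sigma,\cdot)$ and the explicit factor $T$ multiplying $D_b$, is all absorbed by the slightly larger exponential weight $e^{2(L_b+\eta)t}$: indeed $T^p e^{2L_b T}\sqrt{\ln T}$ is bounded by $Ce^{2(L_b+\eta)T}$ uniformly in $T\geq 0$ for any $\eta>0$ and any fixed power $p$, the constant $C$ depending on $\eta$. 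Dividing \eqref{eq:main:control:diffusion} by $e^{2(L_b+\eta)T}$ and taking the supremum over $T$, the deterministic parts and the "$1+$" term are controlled by a constant times $N^{-\alpha}\ln N$, while the $M_B$-term contributes $M_B$ times a constant times $N^{-\alpha}\sqrt{\ln N} \leq N^{-\alpha}\ln N$ for $N$ large. Collecting everything, one sets $\Xi := C(1+M_B)$ for a suitable deterministic constant $C=C(\eta,L_b,K_\sigma,\varepsilon,|D_x|,|D_b|,|D_\sigma|)$; since $M_B^2$ has exponential moments by Theorem \ref{thm:expo:moments}, so does $\Xi^2$, which closes the argument. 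The only genuinely delicate point is checking uniformity for small $T$ (where $|\ln t|$ in $w$ blows up), but there the factor $h\to 0$ kills the logarithm, so $w$ stays bounded and the estimate survives down to $t\to 0$.
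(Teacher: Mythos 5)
There is a genuine gap at the final absorption step, and it is precisely the obstacle that forces the paper to take a different route. Proposition \ref{prop:stability:inequality:diffusion} yields $\gamma(T)\leq 1 + 2e^{2L_{b}T}\left[\cdots\right]$, and when you apply it directly to the pair $(X^{N},\overline{X})$ the additive constant $1$ does not shrink with $N$: after dividing by $e^{2(L_b+\eta)t}$ and taking the supremum over $t$ (which includes $t$ near $0$, where the weight is close to $1$), this term contributes a quantity bounded below by a positive constant independent of $N$. Since $N^{-\alpha}\ln N \to 0$ as $N\to\infty$, no single random variable $\Xi$ valid for all $N$ can satisfy $1 \leq \Xi\, N^{-\alpha}\ln N$; your claim that ``the deterministic parts and the `$1+$' term are controlled by a constant times $N^{-\alpha}\ln N$'' is therefore false, and your argument only produces a bound of order $1$, not the stated rate. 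The rest of your computation (identifying the LBC constants, noting the $M_B^2$ term vanishes since $L_\sigma=0$, absorbing polynomial and $\sqrt{\ln}$ growth in $T$ into $e^{2\eta T}$, the small-$T$ check) is sound, but it cannot rescue this step.

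The paper avoids the trap by first magnifying the discrepancy: it applies the proposition not to $(X^{N},\overline{X})$ but to $Y^{N} := N^{\alpha}X^{N}$ and $\overline{Y}^{N} := N^{\alpha}\overline{X}$, whose difference is expected to be of order $1$, so the ``$1+$'' is harmless — after dividing back by $N^{\alpha}$ it becomes $N^{-\alpha}$, consistent with the target rate. The price of this rescaling is that the driving Brownian motion becomes $\tilde{B}(t) = N^{\alpha}B(N^{-2\alpha}t)$, whose modulus variable $M_{\tilde{B}}$ depends on $N$; Corollary \ref{cor:scaled:BM} is then invoked to dominate $M_{\tilde{B}} \leq (1+\sqrt{2\alpha\varepsilon\ln N})\,M_{B}$ by a single $N$-free random variable, and this extra $\sqrt{\ln N}$ factor is exactly where the final $\ln N$ (rather than $\sqrt{\ln N}$) comes from. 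Your direct route could in principle be repaired, but only by reopening the proof of the proposition rather than citing its statement: when $L_{\sigma}=0$ the term $A_{5} = B(\tilde{\Lambda}(t)) - B(\overline{\Lambda}(t))$ vanishes identically (spatial independence of $\overline{\sigma}$ gives $\tilde{\Lambda} = \overline{\Lambda}$), so the Gronwall step yields $\gamma(T) \leq e^{L_{b}T}\Delta(T)$ with no dichotomy and no additive $1$; that variant would even give the slightly better rate $N^{-\alpha}\sqrt{\ln N}$. As written, however, your proof does not go through.
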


\begin{proof}
Let us consider the processes $Y^{N}$ and $\overline{Y}^{N}$ defined by $Y^{N} := N^{\alpha}X^{N}$ and $\overline{Y}^{N}:= N^{\alpha}\overline{X}^N$ which satisfy
\begin{equation}\label{eq:EDS:YN:Ybar:1}
\begin{cases}
Y^{N}(t) = N^{\alpha}x^{N}_{0} + \int_{0}^{t} N^{\alpha}b^{N}(s,N^{-\alpha}Y^{N}(s)) ds + \tilde{B}(\hat{\Lambda}^{N}(t))\\
\overline{Y}^{N}(t) = N^{\alpha}\overline{x}_{0} + \int_{0}^{t} N^{\alpha}\overline{b}(s,N^{-\alpha}\overline{Y}^{N}(s)) ds + \tilde{B}(\tilde{\Lambda}^{N}(t)),
\end{cases}
\end{equation}
where $\tilde{B}(t) = N^{\alpha}B(N^{-2\alpha} t)$ defines a standard Brownian motion and,
\begin{equation*}
\hat{\Lambda}^{N}(t) := \int_{0}^{t} N^{2\alpha}\sigma^{N}(s)^{2}ds \quad \text{ and } \quad \tilde{\Lambda}^{N}(t) := \int_{0}^{t} N^{2\alpha}\overline{\sigma}(s)^{2}ds.
\end{equation*}
The scaling used in the definition of $Y^{N}$ and $\overline{Y}^{N}$ magnifies the difference between $X^{N}$ and $\overline{X}^N$ (which is expected to be of order $N^{-\alpha}$). Hence, the difference between the $Y$ processes is expected to be of order $1$ and we are now in position to apply Proposition \ref{prop:stability:inequality:diffusion}. 

Let $M_{\tilde{B}}$ be the random variable of Theorem \ref{thm:expo:moments} associated with the Brownian motion $\tilde{B}$. The drift coefficients involved in Equation \eqref{eq:EDS:YN:Ybar:1} are $LBC(L_b, N^{\alpha}K_b + D_b, D_b)$ and the square diffusion coefficients are $LBC(0, N^{2\alpha}K_\sigma + D_\sigma, D_\sigma)$, so it follows that for all $T>0$, $|Y^{N}(T) - \overline{Y}^N(T)| \leq 1+ A(T)$, where 
\begin{equation*}
    A(T) = 2e^{2L_{b}T} \Big[ D_{x} + TD_{b} + M_{\tilde{B}}\, w(T(N^{2\alpha}K_{\sigma} + D_\sigma), TD_{\sigma}) \Big].
\end{equation*}
Yet, there exists some deterministic constant $C>0$ such that
\begin{equation*}
    D_{x} + TD_{b} \leq C e^{2\eta T} \quad \text{and} \quad  w(T(N^{2\alpha}K_{\sigma} + D_\sigma),TD_{\sigma}) \leq C e^{\eta T} \sqrt{1+ \ln N}.
\end{equation*}
Moreover, Corollary \ref{cor:scaled:BM} implies that $M_{\tilde{B}} \leq M_B (1 + \sqrt{2\alpha\varepsilon \ln N})$, where $M_{B}$ is the random variable of Theorem \ref{thm:expo:moments} associated with the initial Brownian motion $B$.

Finally, we have, for all $t>0$,
\begin{equation*}
\frac{|X^{N}(t) - \overline{X}^N(t)|}{e^{2(L_b+\eta)t}} \leq 2 C N^{-\alpha} \Big[ 1 + M_B (1 + \sqrt{2\alpha\varepsilon \ln N}) \sqrt{1+\ln N} \Big],
\end{equation*}
which gives the desired result since $M_{B}^2$ admits exponential moments.
\end{proof}

\subsection{Diffusion approximation of an ODE}

Here, $L_\sigma$ may be non null. For all $N>2$, let us define
\begin{equation*}
b^{N}(t,x) := \overline{b}(t,x) + N^{-\alpha}D_{b} \quad \text{ and } \sigma^{N}(t,x)^{2} := \overline{\sigma}(t,x)^{2} + N^{-\alpha}D_{\sigma},
\end{equation*}
where $D_{b}$ and $D_{\sigma}$ are two constants for simplicity.
Let $\overline{x}_{0} \in \mathbb{R}$ and define $x_{0}^{N} = \overline{x}_{0} + N^{-\alpha}D_{x}$ for some constant $D_{x}\in \mathbb{R}$. Then, let $X^{N}$ and $\overline{X}^N$ satisfy
\begin{equation}\label{eq:EDS:XN:Xbar}
\begin{cases}
X^{N}(t) = x^{N}_{0} + \int_{0}^{t} b^{N}(s,X^{N}(s)) ds + N^{-\alpha}B( N^{\alpha}\Lambda^{N}(t))\\
\overline{X}^N(t) = \overline{x}_{0} + \int_{0}^{t} \overline{b}(s,\overline{X}^N(s)) ds + N^{-\alpha}B(N^{\alpha}\overline{\Lambda}(t)),
\end{cases}
\end{equation}
where 
\begin{equation*}
\Lambda^{N}(t) := \int_{0}^{t} \sigma^{N}(s,X^{N}(s))^{2}ds \quad \text{ and } \quad \overline{\Lambda}^N(t) := \int_{0}^{t} \overline{\sigma}(s,\overline{X}^N(s))^{2}ds.
\end{equation*}

Notice that the diffusion part of Equation \eqref{eq:EDS:XN:Xbar} vanishes when $N$ goes to infinity. In particular, one could prove that both $X^N$ and $\overline{X}^N$ converge (at rate $N^{-\alpha/2}$) to the solution of the ordinary integral equation $\overline{x}(t) = \overline{x}_0 + \int_{0}^{t} \overline{b}(s,\overline{x}(s))ds$. The aim here is to prove that $X^N$ and $\overline{X}^N$ are close at the finer scale $N^{-\alpha}$ (up to logarithmic term).

We are now in position to state the following corollary of Proposition \ref{prop:stability:inequality:diffusion}.

\begin{corollary}\label{cor:convergence:ODE}
    Let $\eta>0$. There exists a random variable $\Xi$ with exponential moments such that, for all $N>1$,
    \begin{equation*}
        \sup_{0\leq t < \infty} \frac{|X^{N}(t) - \overline{X}^N(t)|}{e^{2(L_b + \eta)t}} \leq \Xi \frac{\ln N}{N^{\alpha}}.
    \end{equation*}
\end{corollary}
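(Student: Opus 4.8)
The plan is to mimic the proof of Corollary \ref{cor:convergence:Lsigma:0}: magnify the expected $O(N^{-\alpha})$ gap by the factor $N^\alpha$ and feed the result into Proposition \ref{prop:stability:inequality:diffusion}. Concretely I would set $Y^N := N^\alpha X^N$ and $\overline Y^N := N^\alpha\overline X^N$. The decisive simplification with respect to the previous subsection is that the diffusive part of \eqref{eq:EDS:XN:Xbar} already carries the prefactor $N^{-\alpha}$ in front of $B(N^\alpha\,\cdot)$; hence, after multiplying by $N^\alpha$, the driving noise becomes exactly $B(N^\alpha\Lambda^N(t))$ (resp.\ $B(N^\alpha\overline\Lambda(t))$), with no residual rescaling of $B$. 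Thus $Y^N$ and $\overline Y^N$ solve a system of the form \eqref{eq:EDS:diffusion} driven by the same Brownian motion $B$, with drifts $N^\alpha b^N(s,N^{-\alpha}\,\cdot)$, $N^\alpha\overline b(s,N^{-\alpha}\,\cdot)$ and squared diffusion coefficients $N^\alpha\sigma^N(s,N^{-\alpha}\,\cdot)^2$, $N^\alpha\overline\sigma(s,N^{-\alpha}\,\cdot)^2$; in particular the inequality will involve $M_B$ directly, not the rescaled $M_{\tilde B}$ of Corollary \ref{cor:scaled:BM}.

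A direct computation of the constants shows that the scaled drifts are $\mathrm{LBC}(L_b, N^\alpha K_b + D_b, D_b)$, exactly as before, whereas the scaled squared diffusion coefficients are $\mathrm{LBC}(L_\sigma, N^\alpha K_\sigma + D_\sigma, D_\sigma)$. Two features distinguish this from the previous corollary: the boundedness constant now grows only like $N^\alpha$ (a single power, coming from the prefactor), and the Lipschitz constant $L_\sigma$ need no longer vanish, so the $M_B^2$--term of \eqref{eq:main:control:diffusion} survives. The initial gap scales as $|N^\alpha x_0^N - N^\alpha\overline x_0| = |D_x|$. Writing $\kappa_N := N^\alpha K_\sigma + D_\sigma$, Proposition \ref{prop:stability:inequality:diffusion} applied to $Y^N,\overline Y^N$ then bounds $\gamma(T):=\sup_{0\le t\le T}|Y^N(t)-\overline Y^N(t)|$ by
\begin{equation*}
\gamma(T) \le 1 + 2e^{2L_bT}\big[\,|D_x| + TD_b + M_B\,w(T\kappa_N, TD_\sigma) + M_B^2\,w(T\kappa_N, TL_\sigma)^2\,\big].
\end{equation*}

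The next step is the pair of logarithmic estimates that absorb all the growth. Using the explicit form of $w$ together with $\ln\kappa_N \le \alpha\ln N + C$, one gets for $c\in\{D_\sigma,L_\sigma\}$ (and $N$ large enough that $c\le\kappa_N$)
\begin{equation*}
w(T\kappa_N, Tc)^2 \le cT\big(C_1(1+\ln N) + \varepsilon|\ln T|\big) \le C_2\,e^{2\eta T}(1+\ln N),
\end{equation*}
the last step using that $T\mapsto Te^{-2\eta T}$ and $T\mapsto T|\ln T|e^{-2\eta T}$ are bounded and that $1+\ln N\ge 1$; similarly $|D_x|+TD_b\le C_3 e^{2\eta T}$. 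Substituting, dividing by $e^{2(L_b+\eta)T}$, and using $|X^N(t)-\overline X^N(t)| = N^{-\alpha}|Y^N(t)-\overline Y^N(t)| \le N^{-\alpha}\gamma(t)$ with $T=t$, I obtain a bound that is uniform in $t$:
\begin{equation*}
\sup_{t\ge 0}\frac{|X^N(t)-\overline X^N(t)|}{e^{2(L_b+\eta)t}} \le C_4\,N^{-\alpha}\big[\,1 + M_B\sqrt{1+\ln N} + M_B^2(1+\ln N)\,\big].
\end{equation*}

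To conclude, for $N$ bounded away from $1$ (in particular under the standing assumption $N>2$) the bracket is at most $\Xi\ln N$, where $\Xi$ is affine in $M_B^2$ after bounding $M_B\le \tfrac12(1+M_B^2)$; since $M_B^2$ has exponential moments by Theorem \ref{thm:expo:moments}, so does $\Xi$, which gives the claim. The main technical obstacle is exactly the two logarithmic estimates on $w$: one must verify that the logarithmic factors intrinsic to $w$ produce only the $(1+\ln N)$ prefactor while the entire $T$--dependence, including the $\varepsilon|\ln T|$ term built into $w$, is swallowed by $e^{2\eta T}$. A secondary but conceptually important point is that the dominant contribution is now the $M_B^2(1+\ln N)$ term, linear in $M_B^2$, rather than the term linear in $M_B$ that governs Corollary \ref{cor:convergence:Lsigma:0}; this is why the natural random variable is $\Xi\sim M_B^2$, and why one asks for exponential moments of $\Xi$ itself here, in contrast to the exponential moments of $\Xi^2$ there.
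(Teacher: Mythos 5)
Your proposal is correct and follows essentially the same route as the paper's own proof: the same magnification $Y^N=N^\alpha X^N$, $\overline Y^N=N^\alpha\overline X^N$ (correctly observing that the prefactor $N^{-\alpha}$ in \eqref{eq:EDS:XN:Xbar} means the driving noise remains $B$ itself, so $M_B$ is used directly without Corollary \ref{cor:scaled:BM}), the same LBC constants $(L_b,\,N^\alpha K_b+D_b,\,D_b)$ and $(L_\sigma,\,N^\alpha K_\sigma+D_\sigma,\,D_\sigma)$ fed into Proposition \ref{prop:stability:inequality:diffusion}, and the same absorption of the logarithms of $w$ into $Ce^{\eta T}\sqrt{1+\ln N}$. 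Your final remarks --- bounding $M_B\le\tfrac12(1+M_B^2)$, using $\ln N\ge\ln 2$ for $N>2$, and explaining why here one gets exponential moments of $\Xi$ itself rather than of $\Xi^2$ --- merely make explicit steps the paper leaves implicit.
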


Notice that an equivalent result can be obtained from the proof of \cite[Lemma 3.2.]{kurtz1978strong}: for any $T>0$, 
\begin{equation*}
    \sup_{0\leq t < T} |X^{N}(t) - \overline{X}^N(t)| \leq \Xi Te^{2L_b T} \frac{\ln N}{N^{\alpha}}.
\end{equation*}
Hence, at the price of replacing a linear term in $t$ by an arbitrary small exponential term (and without any loss in the rate of convergence with respect to $N$) we are able to get a uniform control with respect to $t$.

\begin{proof}
Let us consider the processes $Y^{N}$ and $\overline{Y}^{N}$ defined by $Y^{N} := N^{\alpha}X^{N}$ and $\overline{Y}^{N}:= N^{\alpha}\overline{X}^N$ which satisfy
\begin{equation}\label{eq:EDS:YN:Ybar:2}
\begin{cases}
Y^{N}(t) = N^{\alpha}x^{N}_{0} + \int_{0}^{t} N^{\alpha}b^{N}(s,N^{-\alpha}Y^{N}(s)) ds + B(\hat{\Lambda}^{N}(t))\\
\overline{Y}^{N}(t) = N^{\alpha}\overline{x}_{0} + \int_{0}^{t} N^{\alpha}\overline{b}(s,N^{-\alpha}\overline{Y}^{N}(s)) ds + B(\tilde{\Lambda}^{N}(t)),
\end{cases}
\end{equation}
where 
\begin{equation*}
\hat{\Lambda}^{N}(t) := \int_{0}^{t} N^{\alpha}\sigma^{N}(s,N^{-\alpha}Y^{N}(s))^{2}ds \quad \text{ and } \quad \tilde{\Lambda}^{N}(t) := \int_{0}^{t} N^{\alpha}\overline{\sigma}(s,N^{-\alpha}\overline{Y}^{N}(s))^{2}ds.
\end{equation*}
The scaling used in the definition of $Y^{N}$ and $\overline{Y}^{N}$ magnifies the difference between $X^{N}$ and $\overline{X}^N$ (which is expected to be of order $N^{-\alpha}$). Hence, the difference between the $Y$ processes is expected to be of order $1$ and we are now in position to apply Proposition \ref{prop:stability:inequality:diffusion}. 

Let $M$ be the random variable of Theorem \ref{thm:expo:moments} associated with the Brownian motion $B$. The drift coefficients involved in Equation \eqref{eq:EDS:YN:Ybar:2} are $LBC(L_b, N^{\alpha}K_b + D_b, D_b)$ and the square diffusion coefficients are $LBC(L_\sigma, N^{\alpha}K_\sigma + D_\sigma, D_\sigma)$, so it follows that for all $T>0$, $|Y^{N}(T) - \overline{Y}^N(T)| \leq 1+ A(T)$, where 
\begin{equation*}
    A(T) = 2e^{2L_{b}T} \Big[ D_{x} + TD_{b} + M\, w(T(N^{\alpha}K_{\sigma} + D_\sigma), TD_{\sigma}) + M^{2}\, w\left( T(N^{\alpha}K_{\sigma} + D_\sigma), TL_{\sigma} \right)^{2} \Big]
\end{equation*}
Yet, there exists some deterministic constant $C>0$ such that
\begin{equation*}
    D_{x} + TD_{b} \leq C e^{2\eta T} \quad \text{and} \quad  w(T(N^{\alpha}K_{\sigma} + D_\sigma), \max\{ TD_{\sigma}, TL_{\sigma} \}) \leq C e^{\eta T} \sqrt{1+ \ln N}.
\end{equation*}
Finally, we have, for all $t>0$,
\begin{equation*}
\frac{|X^{N}(t) - \overline{X}^N(t)|}{e^{2(L_b+\eta)t}} \leq 2 C N^{-\alpha} \Big[ 1 + M\, \sqrt{1+\ln N} + M^{2}\, (1+\ln N) \Big],
\end{equation*}
which gives the desired result since $M^2$ admits exponential moments.
\end{proof}

\section{Proof of Theorem \ref{thm:expo:moments}}
\label{sec:proof:thm}
\subsection{A modified Garsia–Rodemich–Rumsey lemma}

Let us introduce some notation. Let $\Psi$ and $\mu$ be two non decreasing functions from $\mathbb{R}_{+}$ to $\mathbb{R}_{+}$. Furthermore, assume that $\mu$ is continuous, $\mu(0)=0$, $\lim_{x\to +\infty} \Psi(x) = +\infty$ and define $\Psi^{-1} : [\Psi(0),+\infty)$ by
\begin{equation*}
\Psi^{-1}(u) := \sup \{v,\, \Psi(v)\leq u\}.
\end{equation*}

The following lemma is a simple extension of \cite[Lemma 1]{garsia1970real}.
\begin{lemma}\label{lem:modified:GRR}
For any $T>0$, let $f:[0,T]\to \mathbb{R}$ be a continuous function such that
\begin{equation*}
\int_{0}^{T} \int_{0}^{T} \Psi\left(\frac{|f(t)-f(s)|}{\mu(t-s)}\right) dtds \leq B_{T} <+\infty.
\end{equation*}
Then, for all $t,s\in [0,T]$, 
\begin{equation}\label{eq:modified:GRR}
|f(t)-f(s)| \leq 8 \int_{0}^{|t-s|} \Psi^{-1}\left(\frac{4B_{T}}{u^{2}}\right)  d\mu(u).
\end{equation}
\end{lemma}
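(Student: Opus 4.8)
The plan is to follow the classical Garsia–Rodemich–Rumsey argument of \cite{garsia1970real} almost verbatim, checking that the only modification needed—replacing the original right-hand side by one involving $B_T/u^2$—survives their chaining scheme. The starting point is the double integral hypothesis. First I would introduce, for each $t\in[0,T]$, the inner average
\begin{equation*}
    I(t) := \int_0^T \Psi\left(\frac{|f(t)-f(s)|}{\mu(t-s)}\right)\,ds,
\end{equation*}
so that $\int_0^T I(t)\,dt \leq B_T$ by assumption. By a mean-value/pigeonhole argument there exists a point $t_0$ with $I(t_0)\leq B_T/T$; more importantly, the set of "bad" points where $I(t)$ is large has small Lebesgue measure, which is the mechanism that lets one select a convergent sequence of points with controlled increments.

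The heart of the proof is the dyadic chaining. I would fix $s<t$ (without loss of generality) and construct a decreasing sequence of points $t_0=t, t_1, t_2,\dots$ converging to $s$, together with a sequence of nested intervals, such that at each step both $I(t_n)$ and the inner integral centered at $t_n$ restricted to the next interval are simultaneously controlled. Writing $d_n := \mu(t_n - s)$ and choosing the $t_{n+1}$ so that $d_{n+1}$ is roughly $d_n/2$, the two simultaneous bounds force
\begin{equation*}
    \frac{|f(t_n)-f(t_{n+1})|}{\mu(t_n-t_{n+1})} \leq \Psi^{-1}\left(\frac{4B_T}{(d_{n-1}-d_n)^2}\right)
\end{equation*}
or a comparable estimate, where the quadratic factor $u^2$ in the denominator arises precisely because one divides the measure-of-bad-set bound (linear in the interval length) by the interval length twice—once for each of the two coordinates in the double integral. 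Summing the telescoping series $f(t) - f(s) = \sum_n \big(f(t_n) - f(t_{n+1})\big)$ and bounding each term using $\mu(t_n - t_{n+1}) \leq \mu(t_n - s)$, I would convert the sum into a Riemann sum for the Stieltjes integral $\int_0^{|t-s|}\Psi^{-1}(4B_T/u^2)\,d\mu(u)$, with the universal factor $8$ absorbing the geometric constants from the dyadic choice $d_{n+1}\approx d_n/2$.

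The main obstacle, and the only genuine departure from \cite{garsia1970real}, is bookkeeping the precise constant and verifying that the modified argument of $\Psi^{-1}$—namely $4B_T/u^2$ rather than the original $4B_T/\mu(u)^2$ or a variant—is exactly what the two coordinate-wise applications of Chebyshev/measure estimates produce. Since $\Psi^{-1}$ is nonincreasing in its reciprocal-type argument and $\mu$ is continuous with $\mu(0)=0$, the Stieltjes integral on the right-hand side of \eqref{eq:modified:GRR} is well-defined, and the passage from the dyadic sum to the integral is justified by monotonicity of $\Psi^{-1}$ together with $d_{n-1}-d_n$ playing the role of the increment $du$. I expect the continuity of $f$ to be needed only to ensure that $t_n\to s$ yields $f(t_n)\to f(s)$, closing the telescoping identity.
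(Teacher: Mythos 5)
Your route is genuinely different from the paper's. The paper never re-runs the chaining: it rescales, setting $f_T(x):=f(Tx)$ and $\mu_T(x):=\mu(Tx)$ on $[0,1]$, checks via the substitution $t=Tx$, $s=Ty$ that the double-integral hypothesis of the original lemma holds with constant $B_T/T^2$, invokes \cite[Lemma 1]{garsia1970real} as a black box, and undoes the scaling with $u=Tv$, whereupon $4(B_T/T^2)/v^2$ becomes exactly $4B_T/u^2$. Your plan instead re-derives the lemma by repeating the GRR chaining directly on $[0,T]$. That is viable: the whole construction lives inside $[s,t]$ and uses only $\int_s^t\int_s^t(\cdots)\leq B_T$ together with translation invariance of Lebesgue measure, so the ambient interval length never enters and the unit-interval proof transfers verbatim with $B$ replaced by $B_T$. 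What the paper's proof buys is brevity and a clean reduction to the cited result; what yours buys is self-containedness and an explanation of where the $u^2$ comes from.

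Two concrete criticisms, however. First, your displayed recursive estimate is mis-stated: with $d_n:=\mu(t_n-s)$, the quantity $(d_{n-1}-d_n)^2$ is a squared increment of $\mu$-values, and feeding it into $\Psi^{-1}$ would produce a bound of the form $\int_0^{|t-s|}\Psi^{-1}\left(4B_T/\mu(u)^2\right)d\mu(u)$, which is not the claimed inequality and is in general incomparable with it. In the correct argument the two Chebyshev selections are made over \emph{time} intervals, so the denominator inside $\Psi^{-1}$ is a product of consecutive time-lengths (of the type $(t_{n-1}-s)(t_n-s)$, bounded below by $u^2$ on the relevant range), while the $\mu$-differences enter only as the Stieltjes weights of the Riemann sum, through the halving $\mu(t_{n+1}-s)\approx\tfrac12\,\mu(t_n-s)$ performed in the $\mu$-scale. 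Your write-up conflates these two scales exactly at the step you hedge with ``or a comparable estimate,'' which is the one computation the lemma rests on. Second, the worry you single out as the ``only genuine departure'' --- whether the argument of $\Psi^{-1}$ should be $4B_T/u^2$ rather than $4B_T/\mu(u)^2$ --- is unfounded: \cite[Lemma 1]{garsia1970real} already reads $4B/u^2$ with $u$ the time variable, so a correctly executed chaining requires no modification beyond replacing $B$ by $B_T$; the only nontrivial point, which the paper's scaling trick disposes of, is that the original statement is confined to $[0,1]$. With the recursion restored to the time-scale form and the telescoping constants tracked (this is where the factor $8$ arises), your argument closes.
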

\begin{proof}
For any $T>0$, let $f_{T}:[0,1]\to \mathbb{R}$ be defined by $\tilde{f}(x) := f(Tx)$ and let us define $\mu_{T}$ in the same way. By a change of variable $Tx\to t$ and $Ty\to s$, we have
\begin{equation*}
\int_{0}^{1} \int_{0}^{1} \Psi\left(\frac{|f_{T}(x)-f_{T}(y)|}{\mu_{T}(x-y)}\right) dxdy = \frac{1}{T^{2}} \int_{0}^{T} \int_{0}^{T} \Psi\left(\frac{|f(t)-f(s)|}{\mu(t-s)}\right) dtds \leq \frac{B_{T}}{T^{2}}.
\end{equation*}
Then, the functions $f_{T}$, $\Psi$ and $\mu_{T}$ satisfies the assumption of \cite[Lemma 1]{garsia1970real} which implies that, for all $x,y\in [0,1]$,
\begin{equation*}
|f_{T}(x)-f_{T}(y)| \leq 8 \int_{0}^{|x-y|} \Psi^{-1}\left(\frac{4B_{T}}{T^{2}v^{2}}\right) d\mu_{T}(v).
\end{equation*}
Finally, the change of variable $Tx\to t$, $Ty\to s$ and $Tv\to u$ gives \eqref{eq:modified:GRR}
\end{proof}

The rest of the proof relies on an application of Lemma \ref{lem:modified:GRR} with the functions $\Psi$ and $\mu$ defined by, for all $x\in \mathbb{R}_{+}$,
\begin{equation*}
\Psi(x) := e^{x^{2}/2} - 1 \quad \text{ and } \quad \mu(x):= \sqrt{cx},
\end{equation*}
where $c>1$ is some constant. Notice that $\Psi^{-1}(y) = \sqrt{2\ln(y+1)}$ and $d\mu(x) = \frac{\sqrt{c}}{2\sqrt{x}}dx$.

Let $\varepsilon>0$. For all real number $T> 0$, let us define the random variable
\begin{equation*}
    \xi_{T} := f_{\varepsilon}(T) \int_{0}^{T} \int_{0}^{T} \Psi\left(\frac{|B_{t}-B_{s}|}{\mu(|t-s|)}\right) dsdt,
\end{equation*}
where
\begin{equation}
    f_{\varepsilon}(T) := 
    \begin{cases}
        (1/T+1)^{2(1-\varepsilon)} & \text{if $T<1$},\\
        1 & \text{if $T=1$},\\
        (T-1)^{-2(1+\varepsilon)} & \text{if $T>1$}.
    \end{cases}
\end{equation}
In particular, for any positive number $t$, we have: 1) if $t\leq 1$, $\frac{1}{\lfloor 1/t \rfloor +1} < t \leq \frac{1}{\lfloor 1/t \rfloor}$ and $f_{\varepsilon}(\frac{1}{\lfloor 1/t \rfloor})\geq t^{-2(1-\varepsilon)}$; 2) if $t\geq 1$, $\lceil t \rceil - 1 < t \leq \lceil t \rceil$ and $f_{\varepsilon}(\lceil t \rceil)\geq t^{-2(1+\varepsilon)}$.
Finally, let us denote by $\xi$ the sup over all integer or inverse integer times, that is $\xi := \sup\{ \xi_{T}, \xi_{1/T} \,|\, T\in \mathbb{N}^{*}\}$.

\begin{lemma}\label{lem:finite:moment:xi}
For all $p\in (1,c)$, $\mathbb{E}[\xi^{p}]< +\infty$.
\end{lemma}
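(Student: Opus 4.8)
The plan is to reduce the whole statement to one Gaussian computation together with the two monotonicities hidden in the definition of $\xi_T$. Throughout write $I_T := \int_0^T\int_0^T \Psi(|B_t-B_s|/\mu(|t-s|))\,ds\,dt$, so that $\xi_T = f_\varepsilon(T)\,I_T$. The starting point is the pointwise law: for $s\neq t$ the variable $Z:=(B_t-B_s)/\sqrt{|t-s|}$ is standard normal and $\mu(|t-s|)=\sqrt{c|t-s|}$, hence $\Psi(|B_t-B_s|/\mu(|t-s|)) = e^{Z^2/(2c)}-1$. Using $(e^a-1)^p\le e^{pa}$ for $a\ge 0$ and the Gaussian identity $\mathbb{E}[e^{\lambda Z^2}]=(1-2\lambda)^{-1/2}$ for $\lambda<1/2$, I obtain $\mathbb{E}[\Psi(\cdots)^p]\le (1-p/c)^{-1/2}=(c/(c-p))^{1/2}$, which is finite precisely because $p<c$. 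This is the one place where the hypothesis $p\in(1,c)$ is used, and the bound is uniform in $s,t$; set $\kappa:=(c/(c-p))^{1/(2p)}$.

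Next I would upgrade this pointwise $L^p$ bound to a bound on $I_T$ by Minkowski's integral inequality: $\|I_T\|_{L^p}\le \int_{[0,T]^2}\|\Psi(\cdots)\|_{L^p}\,ds\,dt \le \kappa\,T^2$, i.e. $\mathbb{E}[I_T^p]\le \kappa^p T^{2p}$ for every $T>0$.

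The main obstacle is that a plain union bound over the countable index set fails. Indeed, combining the previous estimate with $f_\varepsilon(T)\sim T^{-2(1+\varepsilon)}$ for large $T$ gives $\mathbb{E}[\xi_T^p]\le \kappa^p f_\varepsilon(T)^p T^{2p}$, which behaves like $T^{-2p\varepsilon}$; the series $\sum_T T^{-2p\varepsilon}$ converges only when $2p\varepsilon>1$, whereas we need the conclusion for every $\varepsilon>0$. The remedy is a dyadic grouping exploiting that $I_T$ is non-decreasing in $T$ (nonnegative integrand on a growing square) while $f_\varepsilon$ is monotone on each of $(0,1)$ and $(1,\infty)$. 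For the large times I group the integers $T\in[2^k,2^{k+1})$ and bound, on that block, $\xi_T=f_\varepsilon(T)I_T\le f_\varepsilon(2^k)\,I_{2^{k+1}}$; then $\mathbb{E}[(f_\varepsilon(2^k)I_{2^{k+1}})^p]\le \kappa^p f_\varepsilon(2^k)^p(2^{k+1})^{2p}$, and since $f_\varepsilon(2^k)\approx 2^{-2k(1+\varepsilon)}$ this is of order $2^{-2pk\varepsilon}$, a convergent geometric series. The inverse-integer times $1/T\le 1/2$ are treated symmetrically by grouping $t\in(2^{-(k+1)},2^{-k}]$: there $I_t\le I_{2^{-k}}$ and, up to a constant, $f_\varepsilon(t)\le C\,2^{2k(1-\varepsilon)}$, so that the contribution of the block is again of order $2^{2pk(1-\varepsilon)}\,2^{-2pk}=2^{-2pk\varepsilon}$, summable for every $\varepsilon>0$.

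Finally I would assemble the pieces. Writing $\xi^p\le \xi_1^p+(\sup_{T\ge2}\xi_T)^p+(\sup_{T\ge2}\xi_{1/T})^p$ and using $(\sup_k a_k)^p\le\sum_k a_k^p$ for nonnegative $a_k$, each of the three terms has finite expectation: $\mathbb{E}[\xi_1^p]\le\kappa^p$ directly, and the two suprema are controlled by the geometric sums $\sum_k 2^{-2pk\varepsilon}<\infty$ built above. Summing yields $\mathbb{E}[\xi^p]<\infty$. The only genuinely delicate point is the dyadic reorganisation that converts the non-summable polynomial tail into a summable geometric one; the Gaussian estimate and the exponent bookkeeping are routine.
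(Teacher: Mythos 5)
Your proof is correct, and it takes a genuinely different route from the paper's. The paper introduces an auxiliary exponent $q$, applies Jensen's inequality to the normalized double integral, computes the same Gaussian moment $\mathbb{E}[\exp(qZ^{2}/(2c))]=\sqrt{c/(c-q)}$, converts the resulting bounds $\mathbb{E}[\xi_{T}^{q}]\leq g(T)$ into tail estimates by Markov's inequality, takes a union bound over all integer and inverse-integer times, and sums $\mathbb{P}(\xi^{p}>n)$ over $n$; you instead bound $\|I_{T}\|_{L^{p}}$ directly at the exponent $p$ via Minkowski's integral inequality and control the supremum by dyadic blocks, exploiting the monotonicity of $T\mapsto I_{T}$ and of $f_{\varepsilon}$ on each branch. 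Your route buys two things. First, it dispenses with the second exponent entirely; incidentally, the paper's choice ``$q\in(1,p)$'' together with the closing claim that ``$q/p<1$ gives the result'' is a slip, since $\sum_{n}\mathbb{P}(\xi^{p}>n)$ converges only if the tail exponent $q/p$ exceeds $1$, i.e.\ one should take $q\in(p,c)$. Second, and more substantively, the obstacle you identified is real and your dyadic regrouping repairs a gap in the paper's own argument: the paper's union bound needs $\sum_{T}g(T)<\infty$ with $g(T)\sim T^{-\varepsilon q}$ (or $T^{-2\varepsilon q}$ with the definition of $f_\varepsilon$ as displayed), which holds only when $\varepsilon q>1$; since $q$ is capped by $c$, the paper's assertion that summability follows ``since $\varepsilon>0$'' is unjustified for small $\varepsilon$, whereas your replacement of the polynomial tail $\sum_{T}T^{-2p\varepsilon}$ by the geometric series $\sum_{k}2^{-2pk\varepsilon}$ works for every $\varepsilon>0$ and every $p\in(1,c)$, proving the lemma in the stated generality. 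Your bookkeeping checks out: on the block $T\in[2^{k},2^{k+1})$ one has $f_{\varepsilon}(T)\leq f_{\varepsilon}(2^{k})\leq C\,2^{-2k(1+\varepsilon)}$ and $I_{T}\leq I_{2^{k+1}}$ with $\|I_{2^{k+1}}\|_{L^{p}}\leq\kappa\,4^{k+1}$, giving the claimed $O(2^{-2pk\varepsilon})$, and the inverse-integer blocks are symmetric; the Gaussian step $(e^{a}-1)^{p}\leq e^{pa}$ with $\mathbb{E}[e^{pZ^{2}/(2c)}]=(1-p/c)^{-1/2}$ is exactly where $p<c$ enters, as you note.
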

\begin{proof}
Let $p\in (1,c)$ and $q\in (1,p)$. For all positive integers $T\geq 1$, we have by convexity,
\begin{eqnarray}
\mathbb{E}\left[\xi_{T}^{q}\right] \leq \mathbb{E}\left[(\xi_{T}+T^{2}f_{\varepsilon}(T))^{q}\right] & = & f_{\varepsilon}(T)^{q} \, \mathbb{E}\left[\left(\int_{0}^{T} \int_{0}^{T} \exp\left(\frac{|B_{t}-B_{s}|^{2}}{2c|t-s|}\right) dsdt\right)^{q}\right]\\
& \leq & f_{\varepsilon}(T)^{q}\,  T^{2(q-1)} \mathbb{E}\left[\int_{0}^{T} \int_{0}^{T} \exp\left(\frac{|B_{t}-B_{s}|^{2}}{2c|t-s|}\right)^{q} dsdt\right]\\
 & =& \frac{T^{2q}}{(T-1)^{(2+\varepsilon)q}} \, T^{-2} \int_{0}^{T} \int_{0}^{T} \mathbb{E}\left[\exp\left(\frac{q}{2c}\left(\frac{|B_{t}-B_{s}|}{\sqrt{|t-s|}}\right)^{2}\right)\right] dsdt.
\end{eqnarray}
Yet, since the increments of $B$ are gaussian, for all $t\neq s$,
\begin{equation*}
\mathbb{E}\left[\exp\left(\frac{q}{2c}\left(\frac{|B_{t}-B_{s}|}{\sqrt{|t-s|}}\right)^{2}\right)\right] = \frac{\sqrt{c}}{\sqrt{c-q}}.
\end{equation*}
Hence, 
\begin{equation*}
\mathbb{E}\left[\xi_{T}^{q}\right]\leq \frac{T^{2q}}{(T-1)^{(2+\varepsilon)q}} \frac{\sqrt{c}}{\sqrt{c-q}},
\end{equation*}
and similarly, $\mathbb{E}\left[\xi_{1/T}^{q}\right]\leq \frac{(T+1)^{(2-\varepsilon)q}}{T^{2q}} \frac{\sqrt{c}}{\sqrt{c-q}}$. Denote $g(T) := \frac{T^{2q}}{(T-1)^{(2+\varepsilon)q}} + \frac{(T+1)^{(2-\varepsilon)q}}{T^{2q}}$ and remark that $g(T)$ is equivalent to $2T^{-\varepsilon q}$ as $T\to \infty$ which in turn implies summability since $\varepsilon>0$.
By Markov's inequality and the union bound, for all integer $n\geq 0$, $\mathbb{P}\left(\max(\xi_{T}^{p}, \xi_{1/T}^{p}) > n\right) = \mathbb{P}\left(\max(\xi_{T}^{q}, \xi_{1/T}^{q}) > n^{q/p}\right) \leq g(T)n^{-q/p}$. Then, the union bound gives
\begin{equation*}
\mathbb{P}\left(\xi^{p}>n\right) \leq \sum_{T=1}^{+\infty} g(T) n^{-q/p} \leq C n^{-q/p}.
\end{equation*}
Finally, the fact that $q/p<1$ gives the result (use for instance the fact that $\mathbb{E}\left[ \xi^{p} \right] \leq \sum_{n=0}^{+\infty} \mathbb{P}\left(\xi^{p}>n\right)$).
\end{proof}

We are now in position to prove Theorem \ref{thm:expo:moments}. Let us first fix some $t \geq 1$. By definition of $\xi$ and properties of the function $f_{\varepsilon}$, we have
\begin{equation*}
\int_{0}^{t} \int_{0}^{t} \Psi\left(\frac{|B_{x}-B_{y}|}{\mu(|x-y|)}\right) dxdy \leq f_{\varepsilon}(\lceil t \rceil)^{-1} \xi_{\lceil t \rceil} \leq t^{2(1+\varepsilon)} \xi,
\end{equation*}
Hence, Lemma \ref{lem:modified:GRR} implies that
\begin{equation*}
\forall x,y\in [0,t], \quad |B_{x}-B_{y}| \leq 8 \int_{0}^{|x-y|} \Psi^{-1}\left(\frac{4t^{2(1+\varepsilon)} \xi}{u^{2}}\right) d\mu(u).
\end{equation*}
Specializing the equation above with $x=t$ and $y=s\leq t$ and denoting $h=|t-s|$ yields
\begin{equation*}
|B_{t}-B_{s}|\leq 8 \int_{0}^{h} \sqrt{2\ln\left(\frac{4t^{2(1+\varepsilon)} \xi}{u^{2}}+1\right)} \frac{\sqrt{c}}{2\sqrt{u}} du,
\end{equation*}
and so
\begin{equation*}
|B_{t}-B_{s}|  \leq  4\sqrt{2c} \int_{0}^{h} \sqrt{\ln\left(4\xi + \frac{u^{2}}{t^{2(1+\varepsilon)}}\right) + \ln\left(\frac{t^{2(1+\varepsilon)}}{u^{2}}\right)} \frac{du}{\sqrt{u}}.
\end{equation*}
The ratio $u^{2}/t^{2(1+\varepsilon)}$ is less than 1 so the second logarithm in the equation above is positive and we can use the inequality $\sqrt{a+b}\leq \sqrt{a}+\sqrt{b}$ to get, for all $s\leq t$ and $t\geq 1$, $|B_{t}-B_{s}| \leq I_{1}+I_{2}+I_3$ with 
\begin{equation*}
\begin{cases}
I_{1} := 4\sqrt{2c} \sqrt{\ln\left(4\xi + 1\right)} \int_{0}^{h} \frac{du}{\sqrt{u}},\\
I_{2} := 8\sqrt{c} \int_{0}^{h}  \sqrt{\ln \frac{t}{u} + \varepsilon|\ln t|} - \frac{1}{\sqrt{\ln \frac{t}{u} + \varepsilon|\ln t|}}  \frac{du}{\sqrt{u}},\\
I_3 :=  8\sqrt{c} \int_{0}^{h} \frac{1}{\sqrt{\ln \frac{t}{u} + \varepsilon|\ln t|}} \frac{du}{\sqrt{u}}.
\end{cases}
\end{equation*}
If $t\leq 1$, one can use the fact that
\begin{equation*}
    \int_{0}^{t} \int_{0}^{t} \Psi\left(\frac{|B_{x}-B_{y}|}{\mu(|x-y|)}\right) dxdy \leq \left( f_{\varepsilon}(1/\lfloor 1/t \rfloor) \right)^{-1} \xi_{1/\lfloor 1/t \rfloor} \leq t^{2(1-\varepsilon)} \xi,
\end{equation*}
and the same arguments as above to prove that the bound $|B_{t}-B_{s}| \leq I_{1}+I_{2}+I_3$ is also valid for all $s\leq t$ and $t\leq 1$.

First, $I_{1}\leq 8\sqrt{2c} \sqrt{\ln\left(4\xi + 1\right)} \sqrt{h}$. Then, to simplify the expressions of $I_2$ and $I_3$, let us denote $a$ such that $\ln a = \ln t + \varepsilon |\ln t|$, so that $\ln \frac{t}{u} + \varepsilon|\ln t| = \ln (a/u)$. Remark that $a\geq t$. The integrand in $I_{2}$ is the derivative of $u\mapsto 2\sqrt{u \ln (a/u)}$, so that $I_2 = 16\sqrt{c}\sqrt{h \ln (a/h)}$. Finally, with the change of variable $y=\sqrt{\ln (a/u)}/\sqrt{2}$, we have
\begin{equation*}
    I_3 = 8\sqrt{c} \sqrt{2\pi}\sqrt{a} \left( 1 - \operatorname{erf}\left( \frac{\sqrt{\ln (a/h)}}{\sqrt{2}} \right) \right),
\end{equation*}
where $\operatorname{erf}$ is the error function defined by $\operatorname{erf}(x) = 2/\sqrt{\pi} \int_{0}^{x} e^{-y^{2}} dy$. If $h\leq a/2$, we use the classic bound $1-\operatorname{erf}(x) \leq e^{-x^{2}}/(x\sqrt{\pi})$ to get
\begin{equation*}
    I_3 \leq 8\sqrt{c} \sqrt{2\pi} \sqrt{a}  \frac{\sqrt{h/a}}{\sqrt{\pi} \frac{\sqrt{\ln (a/h)}}{\sqrt{2}}}
        \leq \frac{16\sqrt{c}}{\sqrt{\ln 2}} \sqrt{h}.
\end{equation*}
If $h\geq a/2$, we use $1-\operatorname{erf}(x) \leq 1$ and $\sqrt{a}\leq \sqrt{2h}$ to get $I_3 \leq 16\sqrt{c}\sqrt{\pi} \sqrt{h}$. Since $(\ln 2)^{1/2} \leq \sqrt{\pi}$, we have for all $h\leq t$, $I_3 \leq 16\sqrt{c}\sqrt{\pi} \sqrt{h}$.

Remind that $\ln (a/u) = \ln \frac{t}{u} + \varepsilon|\ln t|$ and combine the bounds on $I_1$, $I_2$ and $I_3$ to get, for all $t\geq 0$ and $s\leq t$, with $h=|t-s|$,
\begin{equation*}
    |B_{t}-B_{s}| \leq 8\sqrt{2c} \left( \sqrt{\ln\left(4\xi + 1\right)} + \sqrt{2} (1+\sqrt{\pi}) \right) \sqrt{h \left( 1+\ln \frac{t}{h} + \varepsilon|\ln t| \right)}.
\end{equation*}
This inequality holds for all $0\leq s<t<+\infty$ which implies that the random variable $M$ defined in the statement of the Theorem satisfies
\begin{equation*}
M \leq 8\sqrt{2c} \left( \sqrt{\ln\left(4\xi + 1\right)} + \sqrt{2} (1+\sqrt{\pi}) \right),
\end{equation*}
and so, using $1+\sqrt{\pi} \leq 4$, we have $M^{2} \leq 256c\ln(4\xi + 1) + 8192$. In particular, it implies that $M$ is almost surely finite. Moreover, for $\lambda>0$, 
\begin{equation*}
\mathbb{E}\left[e^{\lambda M^{2}}\right]\leq e^{8192\lambda} \mathbb{E}\left[\exp(256c\ln(4\xi + 1))\right]\leq e^{8192\lambda} \mathbb{E}\left[(4\xi+1)^{256c\lambda}\right].
\end{equation*}
Finally, $\lambda$ can be chosen such that $1 < 256c\lambda < c$ and Lemma \ref{lem:finite:moment:xi} gives the result.

\paragraph*{Acknowledgments}

This research has been supported by ANR-19-CE40-0024 (CHAllenges in MAthematical NEuroscience) and has been conducted while the author was in Statify team at Centre Inria de l'Université Grenoble Alpes. The author would also like to thank Markus Fischer for fruitful discussions on the subject.

\bibliographystyle{abbrv}
\bibliography{references.bib}

\begin{thebibliography}{1}

\bibitem{fischer2009the}
M.~Fischer and G.~Nappo.
\newblock {On the moments of the modulus of continuity of It{\^o} processes}.
\newblock {\em Stochastic Analysis and Applications}, 28(1):103--122, 2009.

\bibitem{garsia1970real}
A.~M. Garsia, E.~Rodemich, H.~Rumsey, and M.~Rosenblatt.
\newblock {A real variable lemma and the continuity of paths of some Gaussian
  processes}.
\newblock {\em Indiana University Mathematics Journal}, 20(6):565--578, 1970.

\bibitem{kurtz1976limit}
T.~G. Kurtz.
\newblock Limit theorems and diffusion approximations for density dependent
  markov chains.
\newblock {\em Stochastic Systems: Modeling, Identification and Optimization,
  I}, pages 67--78, 1976.

\bibitem{kurtz1978strong}
T.~G. Kurtz.
\newblock {Strong approximation theorems for density dependent Markov chains}.
\newblock {\em Stochastic Processes and their Applications}, 6(3):223--240,
  1978.

\bibitem{levy1937theorie}
P.~L{\'e}vy.
\newblock Th{\'e}orie de l'addition des variables al{\'e}atoires.
\newblock {\em Gauthier-Villars, Paris}, 1937.

\bibitem{oksendal2003stochastic}
B.~{\O}ksendal.
\newblock {\em Stochastic differential equations}.
\newblock Springer, 2003.

\end{thebibliography}

\end{document}